\tikzset{>=latex}
\pgfplotsset{compat=1.6}
\newdimen\LineSpace
\tikzset{
	line space/.code={\LineSpace=#1},
	line space=10pt
}
\newcommand\nnfootnote[1]{%
	\begin{NoHyper}
		\renewcommand\thefootnote{}\footnote{#1}%
		\addtocounter{footnote}{-1}%
	\end{NoHyper}
}
\newcommand{\N}{\mathbb N}
\newcommand{\R}{\mathbb R}
\newcommand{\C}{\mathbb C}
\newcommand{\mc}{\mathcal}
\renewcommand{\phi}{\varphi}
\newcommand{\n}{\nabla}
\newcommand{\pd}{\partial}
\renewcommand{\ge}{\geqslant}
\renewcommand{\le}{\leqslant}
\newcommand{\mf}{\mathfrak}
\newcommand{\Ad}{\mathrm{Ad}}
\newcommand{\Sym}{\mathrm{Sym}\,}
\newcommand{\End}{\mathrm{End}\,}
\newcommand{\Hom}{\mathrm{Hom}\,}
\newcommand{\la}{\langle}
\newcommand{\ra}{\rangle}
\newcommand{\til}[1]{\widetilde{#1}}
\renewcommand{\bar}[1]{\mkern 0.5mu\overline{\mkern-0.5mu#1\mkern-0.5mu}\mkern 0.5mu}
\theoremstyle{plain}
\newtheorem{theorem}{Theorem}[section]
\newtheorem*{theorem*}{Theorem}
\newtheorem{conjecture}[theorem]{Conjecture}
\newtheorem{lemma}[theorem]{Lemma}
\newtheorem{corollary}[theorem]{Corollary}
\newtheorem{proposition}[theorem]{Proposition}
\newtheorem{question}[theorem]{Question}
\theoremstyle{definition}
\newtheorem{remark}[theorem]{Remark}
\newtheorem{definition}[theorem]{Definition}
\newtheorem{example}[theorem]{Example}
\newcommand{\curvature}{{\Omega}}
\newcommand{\cric}{\mc{S}}
\begin{document}
	\title[On the Structure of Hermitian Manifolds with Semipositive Griffiths Curvature]{On the Structure of Hermitian Manifolds\\ with Semipositive Griffiths Curvature}
	\date{}
	\author{Yury Ustinovskiy}
	\address{Courant Institute of Mathematical Sicences, New York University, 251 Mercer Street, New York, NY, 10012}
	\email{yu3@nyu.edu}
	
	\begin{abstract}
	In this paper we establish partial structure results on the geometry of compact Hermitian manifolds of semipositive Griffiths curvature. We show that after appropriate arbitrary small deformation of the initial metric, the null spaces of the Chern-Ricci two-form generate a holomorphic, integrable distribution. This distribution induces an isometric, holomorphic, almost free action of a complex Lie group on the universal cover of the manifold. Our proof combines the strong maximum principle for the Hermitian Curvature Flow (HCF), new results on the interplay of the HCF and the torsion-twisted connection, and observations on the geometry of the torsion-twisted connection on a general Hermitian manifold.

	\end{abstract}
	\maketitle
	
	\nnfootnote{2010 Mathematics Subject Classification: Primary 53C44, Secondary 53C55\\ keywords: \emph{Hermitian curvature flow}, \emph{Griffiths semipositivity}, \emph{strong maximum principle}}
	
	\section{Introduction \& Background}\label{sec:intro}

	A unifying principle in algebraic, differential and complex geometry states that the positivity of the curvature tensor puts strong topological and geometrical restrictions on the underlying space. Let us list few instances of this idea.
	
	In 1980, Siu and Yau~\cite{si-ya-80} resolved Frankel conjecture~\cite{fr-61} by proving that any K\"ahler manifold of positive \emph{holomorphic bisectional curvature} (HBC) is isomorphic (as a complex manifold) to a projective space $\C P^n$. About the same time, Mori~\cite{mo-79} approached a similar uniformization problem from the algebraic point of view. He proved a slightly stronger result that any complex projective manifold with \emph{ample} tangent bundle is isomorphic to $\C P^n$. Once the situation with strictly positive HBC was resolved it was natural to seek for a uniformization of K\"ahler manifolds of \emph{semipositive} curvature. 
	
	Splitting theorem of Cheeger and Gromoll~\cite{ch-gr-71} applied to a K\"ahler manifold $M$ of semipositive HBC implies that the universal cover of $M$ splits isometrically as $\C^k\times M^*$, where $\C^k$ is flat, and $M^*$ has semipositive HBC and Ricci form positive at least at one point. In 1979, Howard, Smyth and Wu~\cite{ho-sm-wu-79} proved a refined splitting theorem for manifolds with semipositive HBC. The key part of this theorem states that the universal cover of any such manifold is isometric to the product
	\begin{equation}\label{eq:ho-sm-wu}
	\C^k\times M_1\times\dots\times M_r,
	\end{equation}
	where each $M_i$ is a compact K\"ahler manifold of semipositive HBC with $b_2(M_i)=1$, and Ricci curvature positive at least at one point. This result could be thought of as an improved Cheeger-Gromoll splitting~\cite{ch-gr-71} under the assumption of HBC semipositivity. The main consequence of the Howard-Smyth-Wu theorem is that the classification of compact K\"ahler manifolds of semipositive HBC reduces to the study of the factors $M_i$ in~\eqref{eq:ho-sm-wu}. In~\cite{mo-88} Mok resolved the semipositive version of Frankel conjecture by describing the essential pieces in the Howard-Smyth-Wu splitting.
	\begin{theorem}[{Mok \cite[Theorem\,1]{mo-88}}]\label{thm:mok}
		Let $(M,g)$ be a compact K\'ahler manifold of semipositive holomorphic bisectional curvature such that the Ricci curvature is positive at one point. Suppose the second Betti number of $M$ is equal to one. Then either $M$ is biholomorphic to the complex projective space or $(M,g)$ is isometrically biholomorphic to an irreducible compact Hermitian symmetric manifold of rank $>2$.
	\end{theorem}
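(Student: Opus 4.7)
The plan is to combine the Kähler-Ricci flow with a tensorial strong maximum principle on the bisectional curvature, and to exploit the hypothesis $b_2(M)=1$ together with the classification of Kähler symmetric spaces.

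First, evolve $g$ by the Kähler-Ricci flow $\partial_t g_{i\bar j} = -R_{i\bar j}$. The Bando-Mok invariance theorem guarantees that semipositivity of the HBC is preserved along this flow. The evolution equation for the Ricci form, combined with the parabolic strong maximum principle and the hypothesis that $\mathrm{Ric}(g)>0$ at one point, promotes this pointwise positivity to $\mathrm{Ric}>0$ on all of $M$ for every $t>0$. Consequently $M$ is Fano and, by Myers' theorem, $\pi_1(M)$ is finite.

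Second, apply Hamilton's strong maximum principle for systems to the evolution of the curvature tensor restricted to the convex invariant cone of HBC-semipositive tensors. This yields a sharp dichotomy at each (small) time $t>0$: either HBC becomes strictly positive on $M$, or the null set
\[
\mathcal{N}_p = \{X\in T^{1,0}_pM : R(X,\bar X,X,\bar X)=0\}
\]
is a nontrivial holomorphic subbundle of $T^{1,0}M$ of locally constant rank, parallel with respect to the Chern connection. In the first case, Siu and Yau's resolution of the Frankel conjecture gives $M\cong\mathbb{CP}^n$.

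In the second case, the parallelism of $\mathcal{N}$ would a priori induce a local de Rham splitting of the tangent bundle and hence a holomorphic product decomposition of the universal cover, in conflict with $b_2(M)=1$ (using that $\pi_1$ is finite). The only way to reconcile a nontrivial null cone with the topological constraint is for the full curvature tensor to be parallel, $\nabla R\equiv 0$. I expect the promotion from \emph{null cone parallel} to \emph{full curvature tensor parallel} to be the main analytic obstacle: the argument uses the second Bianchi identity and a careful analysis of the invariant subspaces under the evolving curvature operator, together with the exclusion of any nontrivial holonomy reduction compatible with $b_2=1$.

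Finally, a simply connected compact Kähler manifold with $\nabla R=0$, positive Ricci, and $b_2=1$ is, by the de Rham decomposition theorem and Helgason's classification, an irreducible compact Hermitian symmetric manifold. The rank-one case $\mathbb{CP}^n$ has already been handled in the alternative, so the remaining possibilities are of higher rank, giving the stated dichotomy.
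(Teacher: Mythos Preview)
The paper does not prove this theorem at all: it is quoted verbatim from Mok's original article~\cite{mo-88} as historical background in the introduction, with no argument supplied. There is consequently nothing in the paper to compare your proposal against.

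As an independent matter, your sketch follows the broad outline of Mok's actual proof (K\"ahler--Ricci flow, Bando--Mok preservation of semipositive HBC, Hamilton-type strong maximum principle, then a rigidity step), but two points are not correct as written. First, the null locus you introduce,
\[
\mathcal N_p=\{X:R(X,\bar X,X,\bar X)=0\},
\]
is the zero set of holomorphic \emph{sectional} curvature, not bisectional curvature; and even the correct bisectional null set is in general only a parallel \emph{algebraic subvariety} of the tangent space, not a linear subbundle. No de~Rham splitting is available from it, so your ``conflict with $b_2(M)=1$'' mechanism does not fire as stated. Second, the passage from ``parallel null cone'' to ``$\nabla R\equiv 0$'' that you flag as the main obstacle is in fact the entire substance of Mok's paper: it requires a delicate pointwise algebraic classification of K\"ahler curvature tensors with nonnegative bisectional curvature (a Berger-type argument combined with Mok's analysis of the zero structure), not merely the second Bianchi identity plus holonomy reduction. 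Without that step, what you have is a plausible roadmap rather than a proof.
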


	In early 1990s in a series of papers~\cite{ca-pe-91,dps-2,dps-94} Campana, Demailly, Peternell and Schneider initiated a study of projective and complex manifolds with numerically effective (nef) tangent bundles. This is a semipositivity notion in algebraic geometry, which provides a suitable generalization of ampleness.  The authors proved several structure results for K\"ahler manifolds with nef tangent bundle and proposed the following uniformization conjecture.
	
	\begin{conjecture}[Campana-Peternell conjecture~\cite{ca-pe-91}]\label{conj:cp}
		Let $M$ be a projective manifolds with nef tangent bundle and ample anticanonical bundle $-K_M$ (i.e., $M$ is Fano). Then $M$ is isomorphic to a rational homogeneous space $G/P$, where $G$ is an algebraic reductive group, and $P$ is its parabolic subgroup.
	\end{conjecture}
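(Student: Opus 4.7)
The Campana--Peternell conjecture is a major open problem, so any proposal must combine several lines of attack from Mori theory, projective geometry and representation theory. The plan is to follow the reduction scheme that emerged from the work of Campana, Demailly, Peternell and Schneider: exploit the Mori-theoretic structure of Fano manifolds with nef tangent bundle to reduce to the key case where the Picard number $\rho(M)=1$, and then identify such an $M$ with a rational homogeneous space of Picard number one.

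First, I would invoke the structure results of Demailly--Peternell--Schneider on Fano manifolds with nef tangent bundle. Each Mori contraction in this setting is a smooth morphism, and both the fibers and the base inherit nef tangent bundles. Iterating this process on the factors with larger Picard number reduces the problem, up to equivariant smooth fibrations with rational-homogeneous fibers, to the case $\rho(M)=1$. At this point, all extremal rational curves lie in a single numerical class and sweep out $M$ through every point, providing a minimal dominating component $\mc{K}$ of $\mathrm{RatCurves}(M)$.

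Next, in the Picard-one setting, I would study the \emph{variety of minimal rational tangents} (VMRT) $\mc{C}_x\subset\P(T_x M)$ at a general point $x$, following Hwang--Mok. The nefness of $T_M$ should translate into strong positivity and projective-geometric rigidity of $\mc{C}_x$; in particular, one expects $\mc{C}_x$ to coincide with the VMRT of some rational homogeneous $G/P$ of Picard number one. The final step would be to apply Hwang--Mok's Cartan--Fubini-type extension theorem to upgrade the formal $G$-structure encoded by the VMRT to a global biholomorphism $M\simeq G/P$.

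The main obstacle is precisely this Picard-one step: while the descent to $\rho=1$ via smooth Mori contractions is essentially known, classifying the possible VMRTs of a Picard-one Fano with nef tangent bundle and extending a local model identification to a global one is the core unresolved content of the conjecture. An alternative route, closer in spirit to the present paper, would be to produce a Hermitian metric on $M$ whose Griffiths curvature is semipositive---perhaps by a regularization of an approximating family exploiting the nefness of $T_M$---and then combine the splitting philosophy of Howard, Smyth and Wu with Mok's Theorem~\ref{thm:mok}; constructing such a genuine Griffiths-semipositive metric from nefness alone is, however, itself a fundamental open problem, and is one of the motivations for the structure theory developed in this article.
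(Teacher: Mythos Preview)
The statement you were asked to prove is not a theorem in the paper but the Campana--Peternell \emph{conjecture}, which the paper explicitly presents as open: ``In the full generality both Campana-Peternell conjecture and its Hermitian counterpart are widely open. There is even no general result guaranteeing the existence of a nontrivial holomorphic vector field on such manifolds.'' There is therefore no proof in the paper to compare your proposal against; the conjecture is stated purely as motivation for the weaker Hermitian version (Conjecture~\ref{conj:wcp}) and for the structure results of Section~\ref{sec:main}.

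Your write-up is not actually a proof either, and you are forthright about this. What you have produced is an accurate survey of the known reduction scheme (DPS smooth contractions reducing to Picard number one, then VMRT analysis \`a la Hwang--Mok) together with an honest identification of exactly where the argument stalls: classifying the VMRT of a Picard-one Fano with nef tangent bundle and then globalizing via Cartan--Fubini is the unresolved core. Your alternative route --- produce a genuine Griffiths-semipositive metric from nefness and then apply Hermitian structure theory --- is likewise correctly flagged as conditional on another open problem. So there is no mathematical error to point to; the ``gap'' is simply that the conjecture is open and your proposal does not close it, which you already acknowledge. If the intent of the exercise was to reproduce a proof from the paper, the correct answer is that no such proof exists there.
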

	
	This conjecture can be thought of as an \emph{algebraic} version of Mok's Theorem~\ref{thm:mok}. There is much evidence supporting this conjecture: it is known in dimensions $2$ and $3$ by virtue of classification of Fano manifolds~\cite{dps-94}, it is known under various additional geometric conditions~\cite{mo-02,mo-08,ya-17}. See also a detailed survey~\cite{mu-15}.
	
	By an elementary observation made in~\cite{dps-94}, we know that all complex homogeneous manifolds admit an \emph{Hermitian} metric of semipositive Griffiths curvature (see Definition~\ref{def:gr_semipositive} and Example~\ref{ex:submersion} below). It is well known that the existence of such metric implies nefness. Motivated by this observation, in~\cite{us-16} we proposed a weak \emph{Hermitian} version of Campana-Peternell conjecture (see also~\cite{us-17-2,us-th}).
	\begin{conjecture}[Weak Hermitian Campana-Peternell conjecture]\label{conj:wcp}
		Let $(M,g)$ be a Hermitian manifold such that its Chern curvature $\Omega$ is Griffiths semipositive, and its first Chern-Ricci form $\rho$ is strictly positive. Then $M$ is isomorphic to a rational homogeneous space $G/P$, where $G$ is an algebraic reductive group, and $P$ is its parabolic subgroup.
	\end{conjecture}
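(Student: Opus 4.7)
My overall plan is to combine the HCF-based partial structure theorem announced in this paper with the algebro-geometric rigidity underlying Mok's theorem in the K\"ahler case. Strict Chern-Ricci positivity $\rho>0$ should first be leveraged to upgrade $M$ to a projective Fano manifold: since $\rho$ is a strictly positive $(1,1)$-form representing (a multiple of) $c_1(M)$, one should be able to verify the $\partial\bar\partial$-lemma on $M$ from the curvature hypotheses and then apply Kodaira embedding to obtain projectivity. Griffiths semipositivity of $\Omega$ gives a metric refinement of nefness of $T_M$, placing us in the Campana--Peternell regime, but with substantially more differential-geometric input --- and the extra input is exactly what we hope will bypass the still-open algebraic CP conjecture.

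The concrete strategy is as follows. Run HCF starting from $g$; by the strong maximum principle developed in this paper, Griffiths semipositivity persists along the flow, and strict positivity of $\rho$, being an open condition, also persists at least for short time. Applying the paper's structure result after an arbitrarily small perturbation produces a holomorphic, integrable distribution coming from the null space of $\rho$. In our setting this distribution is empty because $\rho>0$, so the result is vacuous on the nose. I would therefore seek to refine the paper's method to apply to eigenbands of the Chern-Ricci endomorphism rather than to its kernel: using the HCF/torsion-twisted-connection identities developed in the paper, show that the minimal eigenband generates a holomorphic integrable distribution. Iterating on the quotient bundle produces a tower of holomorphic fibrations with homogeneous fibers and base; gluing the pieces via a Matsushima-type reductivity statement for the isometry group of a limit metric should realize $M=G/P$.

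The main obstacle I anticipate is the integrability of these eigenband distributions. In the K\"ahler case Mok establishes the analogous statement by combining the second Bianchi identity with Berger's holonomy classification, both of which are genuinely obstructed in the Hermitian setting by the presence of torsion. Producing torsion-twisted substitutes strong enough to force integrability, parallel to the paper's treatment of the kernel of $\rho$, is the technical crux. A secondary obstacle is the descent from the universal cover to $M$: the paper's partial result gives an almost-free action of a complex Lie group on the universal cover, and one must show this action descends, for which strict positivity of $\rho$ should provide compactness (a Myers-type statement adapted to Chern-Ricci) while the interaction of $\pi_1(M)$ with the constructed foliations needs to be controlled. Absent a full convergence theory for HCF in the semipositive regime, some of these steps may require working with subsequential limits or soliton-type solutions rather than a unique long-time flow.
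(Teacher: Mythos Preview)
The statement you are attempting to prove is Conjecture~1.4 in the paper, and the paper explicitly declares it open: ``In the full generality both Campana-Peternell conjecture and its Hermitian counterpart are widely open. There is even no general result guaranteeing the existence of a nontrivial holomorphic vector field on such manifolds.'' There is therefore no proof in the paper to compare your proposal against; the paper's main theorem treats only the complementary regime where $\rho$ is \emph{not} strictly positive, and its conclusion there is the existence of a nontrivial $G$-action, not a classification.

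Your proposal is a strategy sketch rather than a proof, and you correctly flag its own decisive gap. The paper's machinery --- the strong maximum principle for $\Omega$ under HCF and the $\nabla^T$-parallelism of the resulting null set --- is specifically a statement about the \emph{zero locus} $\{\xi\otimes\bar\eta^\#:\Omega(\xi,\bar\xi,\eta,\bar\eta)=0\}$. When $\rho>0$ this set contains no elements of the form $\xi\otimes\bar\eta^\#$ with $\eta$ arbitrary, and the paper's structure theorem yields the trivial group, as you note. Your proposed extension to ``eigenbands'' of the Chern-Ricci endomorphism has no analogue in the paper and no known mechanism: the maximum principle controls where a nonnegative quantity vanishes, not how its positive eigenvalues stratify. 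Producing $\nabla^T$-invariance (and hence integrability) for a lowest-eigenvalue distribution would require an entirely new ingredient, and absent it the argument does not get off the ground. This step is not a technicality --- it is essentially the content of the conjecture.

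One smaller remark: your first step is cleaner than you present it. Since $\rho$ is a closed strictly positive $(1,1)$-form, it is itself a K\"ahler form, so $M$ is K\"ahler (as a complex manifold) and Fano by Kodaira; no $\partial\bar\partial$-lemma argument is needed. But this K\"ahler metric is $\rho$, not $g$, and you know nothing about the curvature of $\rho$ from the hypotheses on $g$. You are thus left with a Fano manifold carrying a (possibly non-K\"ahler) Hermitian metric of semipositive Griffiths curvature --- precisely the starting hypothesis --- and neither Mok's theorem nor the paper's results apply.
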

	The assumptions on $(M,g)$ in Conjecture~\ref{conj:wcp} immediately imply the assumptions of Campana-Peternell conjecture, thus we refer to it as the \emph{weak} Hermitian version of Conjecture~\ref{conj:cp}. In the full generality both Campana-Peternell conjecture and its Hermitian counterpart are widely open. There is even no general result guaranteeing the existence of a nontrivial holomorphic vector field on such manifolds.
	
	Examples of~\cite{dps-94} demonstrate that the assumptions of ampleness of $-K_M$ and positivity of $\rho$ are essential for Conjectures~\ref{conj:cp} and~\ref{conj:wcp} respectively, as it was crucial for Theorem~\ref{thm:mok}. More specifically, there are examples of compact Hermitian manifolds with semipositive Griffiths curvature, which are not homogeneous. Namely, Example 3.3 in~\cite{dps-94} provides a non-homogeneous unitary-flat quotient of $\Gamma\times \Gamma\times \Gamma$, where $\Gamma$ is an elliptic curve. Recently, Yang~\cite{ya-17} observed that the metrics of Gauduchon-Ornea~\cite{ga-or-98} on the diagonal Hopf surfaces are of semipositive Griffiths curvature, while it is known that a generic diagonal Hopf surface is non-homogeneous. Both these (counter)examples could be easily included into infinite families. Interestingly, all such known examples $(M,g)$ share a very unique property revealing certain traces of homogeneity: the universal cover of $M$ admits a nontrivial, almost free (i.e. with discrete isotropy subgroups), holomorphic, isometric action of a complex Lie group. Inspired by this observation, we address the following question.
	
	\begin{question}\label{q:main}
		What can be said about a Hermitian manifold $(M,g)$ with semipositive Griffiths curvature, if we do not assume the strict positivity of the Chern-Ricci form $\rho$?
	\end{question}
	
	A satisfactory answer to Question~\ref{q:main} should provide a Hermitian analog of the Howard-Smyth-Wu splitting theorem.
	The main difficulty in approaching this question is that there is no analog of Cheeger-Gromoll splitting in Hermitian setting to make even the first step in the Howard-Smyth-Wu's proof. In this paper, we show that, while there is no hope for the actual splitting, any Hermitian manifold of semipositive Griffiths curvature is \textit{locally} modeled on a Hermitian manifold with an almost free, holomorphic and isometric action of a complex Lie group $G$. In the K\"ahler setting, the role of group $G$ was played by the factor $\C^k$.
	
	\begin{theorem*}[Theorem~\ref{thm:main}]
		Let $(M,g_0)$ be a compact Hermitian manifold with semipositive Griffiths curvature. Then there exists a complex Lie group $G$ acting almost freely, holomorphically and $g_0$-isometrically on the universal cover $\widetilde{M}$ of $M$ with the following property:
		\smallskip
		
		\noindent
		For any $\epsilon>0$, $k\in \N$ there exists a $G$-invariant Hermitian metric $g$ on $M$ such that
		\begin{enumerate}
			\item[(i)] $||g_0-g||_{C^k,g_0}< \epsilon$;
			
			\item[(ii)] $(M, g)$ has semipositive Griffiths curvature;
			
			\item[(iii)] the distribution generated by the infinitesimal action of $G$ on $\widetilde{M}$ coincides with the null space distribution of the Chern-Ricci form $\rho=\rho(g)$:
			\[\mathrm{Null}(\rho_x)=T_x (G.x),\]
			where $G.x$ is the orbit of $x\in\widetilde{M}$.
		\end{enumerate}
	\end{theorem*}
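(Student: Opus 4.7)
The plan is to produce both the group $G$ and the perturbation $g$ simultaneously by running, for a short time, a Hermitian Curvature Flow (HCF) designed to preserve Griffiths semipositivity, and then invoking a strong maximum principle to rigidify the null space of the Chern-Ricci form into the orbit distribution of a holomorphic isometric Lie group action. The three ingredients announced in the abstract play three distinct roles: the strong maximum principle applied along HCF extracts $\mathrm{Null}(\rho)$ as a parallel subbundle, the general geometry of the torsion-twisted connection $\widehat\nabla$ promotes this subbundle to a holomorphic integrable foliation generated by Killing fields, and the new interplay of HCF with $\widehat\nabla$ shows the resulting Lie group is independent of the flow parameter and acts isometrically already for $g_0$.

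\textbf{Stages 1 and 2: HCF, strong maximum principle, and extraction of $G$.} Evolving $g_0$ by the author's Griffiths-semipositive-preserving HCF yields a short-time family $(g_t)_{t\in[0,T)}$. The Chern-Ricci form $\rho_t=\rho(g_t)$ remains semipositive and satisfies a degenerate parabolic equation; a tensor-valued strong maximum principle of Bando--Hamilton type then forces, for every $t\in(0,T)$, the null distribution
\[
\mathcal{N}_t := \mathrm{Null}(\rho_t)\subset T^{1,0}M
\]
to have locally constant rank and to be parallel with respect to the torsion-twisted Chern connection $\widehat\nabla$ of $g_t$. Structural identities for $\widehat\nabla$ — valid on any Hermitian manifold — then imply that a $\widehat\nabla$-parallel holomorphic subbundle is automatically integrable: for $X,Y\in\Gamma(\mathcal{N}_t)$ one has $[X,Y]=\widehat\nabla_X Y-\widehat\nabla_Y X - T_{\widehat\nabla}(X,Y)$, and each term stays in $\mathcal{N}_t$ once the appropriate type of the torsion is used. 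Lifting to $\widetilde M$ and exploiting parallelness, I would produce a finite-dimensional Lie algebra $\mathfrak{g}_t$ of holomorphic $g_t$-Killing vector fields spanning $\mathcal{N}_t$ at every point, which exponentiates to a complex Lie group $G_t$ whose orbits are precisely the leaves of $\mathcal{N}_t$; stabilizers are discrete because the fields span $\mathcal{N}_t$ pointwise, so the action is almost free.

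\textbf{Stage 3: Independence of $t$ and transfer to $g_0$.} Here the HCF--$\widehat\nabla$ interplay is essential: one shows that $\widehat\nabla$-parallel sections of $\mathcal{N}_t$, equivalently the Killing fields spanning $\mathfrak{g}_t$, evolve by a linear parabolic equation along the flow, so that $\mathfrak{g}_t$ is in fact independent of $t$ for $t\in(0,\delta)$. The common Lie algebra $\mathfrak{g}$ consists of holomorphic vector fields on $\widetilde M$ that are $g_t$-Killing for every small $t$; by continuity at $t=0$ they are $g_0$-Killing as well. Take $G$ to be the corresponding complex Lie group. Given $\epsilon>0$ and $k\in\N$, choose $t\in(0,\delta)$ small enough that $\|g_t-g_0\|_{C^k,g_0}<\epsilon$ and set $g:=g_t$: property (i) is short-time smoothness of the flow, (ii) is Griffiths preservation along HCF, and (iii) is the identification $\mathcal{N}_t = T_x(G.x)$ from stage 2. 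Finally, $G$-invariance of $g$ comes for free from the canonical (diffeomorphism-equivariant) character of HCF combined with the fact that $G\subset\mathrm{Isom}(g_0)$.

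\textbf{Main obstacle.} The delicate step is stage 3: showing that the Lie algebras $\mathfrak{g}_t$ produced at different positive times actually coincide — rather than shrink, rotate, or grow with $t$ — and survive in the limit $t\downarrow 0$. A priori the distributions $\mathcal{N}_t$ could change discontinuously in $t$, and the associated Killing fields could fail to extend. This is precisely where the \emph{new interplay between HCF and the torsion-twisted connection} is needed, encoded in a parabolic rigidity statement for $\widehat\nabla$-parallel sections along the flow, together with short-time analyticity of HCF. By comparison, stage 1 follows the standard tensor maximum-principle paradigm adapted to non-K\"ahler HCF, and stage 2 is a pointwise algebraic consequence of torsion identities once parallelness has been established.
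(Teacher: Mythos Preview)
Your outline matches the paper's proof: run the HCF for short time, apply the strong maximum principle at any $t>0$ to control $\mathrm{Null}(\rho_t)$, use the geometry of the torsion-twisted connection to manufacture the Lie group $G$, and finally invoke a parabolic rigidity statement for $\widehat\nabla$-parallel sections along the flow (backward uniqueness) to make $G$ independent of $t$ and $g_0$-isometric. Your identification of Stage~3 as the delicate new ingredient is exactly right.

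There is one substantive point you gloss over in Stage~2. You assert that the strong maximum principle makes $\mathcal N_t=\mathrm{Null}(\rho_t)$ a $\widehat\nabla$-parallel \emph{subbundle}, and then ``exploiting parallelness'' you produce a basis of $\widehat\nabla$-parallel Killing fields spanning it. But a parallel subbundle need not admit a parallel trivialization: holonomy can preserve $\mathcal N_t$ while acting nontrivially inside it. What the paper actually extracts from the strong maximum principle is the stronger pointwise statement $\Omega^T(\cdot,\cdot)\xi=0$ for every $\xi\in\mathcal N_t$ (equivalently $\Omega(\xi,\cdot,\cdot,\cdot)=0$ and $\nabla_\xi T=0$); the Ambrose--Singer theorem then identifies $\mathcal N_t$ with the holonomy-fixed subspace $\mathcal F_x=\{\xi:\mathrm{Hol}_x(\nabla^T)\xi=\xi\}$ on the simply connected cover, and it is \emph{this} that delivers the global parallel sections. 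The same curvature annihilation is also what closes your integrability argument: for parallel $\xi,\eta$ one has $[\xi,\eta]=\pm T(\xi,\eta)$, and proving this is again $\widehat\nabla$-parallel (so that the structure constants are constant and $\mathfrak g$ is genuinely a Lie algebra) uses $\Omega^T(\cdot,\cdot)\xi=0$ via the Bianchi identities, not merely the invariance of $\mathcal N_t$. Your torsion-formula sketch ``each term stays in $\mathcal N_t$'' does not by itself yield this.
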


	Let us briefly outline the strategy of the proof of Theorem~\ref{thm:main}. In~\cite{us-16,us-17-2} we proposed a program of attacking the weak Hermitian Campana-Peternell conjecture by the means of metric flows. We identified a member of a general family of Hermitian Curvature Flows~\cite{st-ti-11}, which preserves and improves various curvature semipositivity notions in Hermitian geometry. It was observed in~\cite{us-16,us-17-2,us-th} that geometric properties of this flow are closely related to the torsion-twisted connection $\n^T$ (see Definition~\ref{def:torsion-twisted}). Our proof of Theorem~\ref{thm:main} combines several ingredients, which are of independent interest:
	\begin{enumerate}
		\item[1.] Structure result on the $\n^T$-parallel subspaces of the tangent space on a general Hermitian manifold (Proposition~\ref{prop:killing}, Proposition~\ref{prop:integrable_distribution}, Theorem~\ref{thm:G_action});
		\item[2.] Invariance of the torsion-twisted holonomy under the HCF on a general Hermitian manifold (Theorems~\ref{thm:nt_invariance} and~\ref{thm:nt_invariance_2});
		\item[3.] Parabolic maximum principle for the Chern curvature on a Hermitian manifold with semipositive Griffiths curvature (Theorem 5.2 of~\cite{us-16}).
	\end{enumerate}
	Given a compact Hermitian manifold $(M,g_0)$ with semipositive Griffiths curvature, we: (i) run HCF $g(t)$ for arbitrary small time; (ii) for time slice $t>0$ use regularization properties of HCF to construct a $\n^T$-fixed subspace $\mc F\subset T^{1,0}M$; (iii) use geometric properties of the torsion-twisted connection to identify $\mc F$ with an infinitesimal isometric action of a complex Lie group; (iv) use invariance of the torsion-twisted holonomy under HCF to extend this $G$-structure to the $t=0$ time slice.

	The rest of this paper is organized as follows. In Section~\ref{sec:notations} we provide background on Hermitian geometry and set up notations. In Section~\ref{sec:torsion-twisted-connection}, we study the geometry of the torsion-twisted connection on a general Hermitian manifold. Section~\ref{sec:hcf} describes the interplay between the Hermitian Curvature Flow and the torsion-twisted connection. Finally, in Section~\ref{sec:main} we give the proof of our main result.

	\section{Notations}\label{sec:notations}
	
	Let $(M,g)$ be a Hermitian manifold, i.e., a complex manifold $M$ with an operator of almost complex structure $J$ and a $J$-invariant Riemannian metric $g$. Operator of almost complex structure $J$ induces the decomposition of the complexified tangent spaces into $\pm\sqrt{-1}$-eigenspaces:
	\[
	T_\C M:=TM\otimes\C=T^{1,0}M\oplus T^{0,1}M,
	\]
	and the corresponding decomposition of the spaces of exterior forms:
	\[
	\Lambda^r_\C(M):=\Lambda^r(T^*M)\otimes\C=\bigoplus_{p+q=r}\Lambda^{p,q}(M).
	\]
	Any $J$-compatible connection on $TM$ preserves this decompositions. In this paper we will be using two connections canonically attached to a Hermitian manifold: the Chern connection $\n$ and the \emph{torsion-twisted} connection $\n^T$. The latter will be defined and studied in the next section.
	
	\begin{definition}[Chern connection]
		Chern connection $\n$ is the unique \emph{unitary} connection on $TM$ which is compatible with the holomorphic structure, i.e., $\n^{0,1}=\bar{\pd}$ on $\Gamma(T^{1,0}M)$.
	\end{definition}
	
	\begin{remark}
		The defining properties of the Chern connection immediately imply that the torsion tensor $T\in\Lambda^2(M)\otimes TM$,
		\[
		T(X,Y):=\n_X Y-\n_Y X-[X,Y]
		\]
		has vanishing $\Lambda^{1,1}(M)\otimes TM$ component. In other words, for any $\xi,\eta\in T^{1,0}M$, we have $T(\xi,\bar{\eta})=0$. Equivalently, for any $X,Y\in TM$
		\[
		T(X,JY)=T(JX,Y)=JT(X,Y).
		\]
	\end{remark}
	
	\begin{definition}
		The \emph{Chern curvature} of a Hermitian manifold $(M, g)$ is the curvature of $\n$.
		\[
		\Omega(X,Y)Z:=(\n_X\circ\n_Y-\n_Y\circ\n_X-\n_{[X,Y]})Z,
		\]
		where $X,Y,Z\in \Gamma(TM)$.
	\end{definition}
	The Chern curvature tensor is completely determined by its components $\Omega_{i\bar j k}^{\ \ \ l}$
	\[
	\Omega_{i\bar j k}^{\ \ \ l}\frac{\pd}{\pd z^l}:=\Omega\Bigl(\frac{\pd}{\pd z^i},\frac{\pd}{\pd \bar{z^j}}\Bigr)\frac{\pd}{\pd z^k}.
	\]

	Unlike the Riemannian case, the Chern curvature does not satisfy the classical Bianchi identities, since the Chern connection has torsion. However, in this case, slightly modified identities, involving torsion still hold~\cite[Ch.\,III, Thm.\,5.3]{ko-no-63}.
	
	\begin{proposition}[Bianchi identities for the Chern curvature and torsion]
		\begin{equation}
		\begin{split}
		\curvature_{i \bar{j} k \bar{l}}=\curvature_{k \bar{j} i \bar{l}} + \n_{\bar{j}} T_{ki\bar l},\quad &
		\curvature_{i \bar{j} k \bar{l}}=\curvature_{i \bar{l} k \bar{j}} + \n_{i} T_{\bar{l}\bar{j} k},\\
		\n_i T_{jk}^l+\n_k T_{ij}^l+\n_j T_{ki}^l=&
		T_{ij}^pT_{kp}^l+T_{jk}^pT_{ip}^l+T_{ki}^pT_{jp}^l.\\
		\n_m \curvature_{i \bar{j} k \bar{l}} = \n_i\curvature_{m \bar{j} k \bar{l}} + T^p_{i m}\curvature_{p \bar{j} k \bar{l}},\quad &
		\n_{\bar n} \curvature_{i \bar{j} k \bar{l}} = \n_{\bar j}\curvature_{i \bar{n} k \bar{l}} + T^{\bar s}_{\bar j \bar n}\curvature_{i \bar{s} k \bar{l}}.
		\end{split}
		\end{equation}
	\end{proposition}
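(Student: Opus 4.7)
\smallskip

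The plan is to specialize the general Bianchi identities for a linear connection with torsion to the Chern connection and exploit the defining property $T^{1,1}=0$ to kill most of the terms.

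Recall that for any linear connection $\n$ on a manifold, the first algebraic Bianchi identity reads
\begin{equation}
\mathfrak{S}_{X,Y,Z}\,\curvature(X,Y)Z \;=\; \mathfrak{S}_{X,Y,Z}\bigl\{(\n_X T)(Y,Z) + T(T(X,Y),Z)\bigr\},
\end{equation}
and the second (differential) Bianchi identity reads
\begin{equation}
\mathfrak{S}_{X,Y,Z}\bigl\{(\n_X \curvature)(Y,Z)W + \curvature(T(X,Y),Z)W\bigr\} \;=\; 0,
\end{equation}
where $\mathfrak{S}$ denotes the cyclic sum. Both are standard consequences of the Jacobi identity for $[\cdot,\cdot]$ applied to $\n$; see \cite[Ch.\,III]{ko-no-63}. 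For the Chern connection the torsion has no $(1,1)$-component, i.e.\ $T(\xi,\bar\eta)=0$ for holomorphic $\xi$ and antiholomorphic $\bar\eta$, and $T$ itself is of type $(2,0)$ with values in $T^{1,0}M$.

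To derive the first pair of identities, apply the algebraic Bianchi identity with the triple $(X,Y,Z)=(\pd_i,\pd_{\bar j},\pd_k)$. Since $T$ vanishes on mixed-type pairs, every torsion-squared term $T(T(\cdot,\cdot),\cdot)$ in the cyclic sum drops out, as do the terms $(\n_{\bar j} T)(\pd_i,\pd_k)$ paired with the corresponding vanishing torsion components; what survives is precisely the asserted equality $\curvature_{i\bar j k\bar l}-\curvature_{k\bar j i\bar l}=\n_{\bar j}T_{ki\bar l}$, after lowering the index via $g$. The conjugate identity $\curvature_{i\bar j k\bar l}=\curvature_{i\bar l k\bar j}+\n_i T_{\bar l\bar j k}$ follows by an identical computation with the triple $(\pd_i,\pd_{\bar j},\pd_{\bar l})$ (or by complex conjugation). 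The purely holomorphic identity for $\n T$ is obtained by applying the same algebraic Bianchi identity to the fully $(1,0)$-triple $(\pd_i,\pd_j,\pd_k)$: the curvature cyclic sum vanishes (being of type $(1,1)$), and the surviving terms are exactly those stated.

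For the last pair, apply the differential Bianchi identity to $(X,Y,Z)=(\pd_i,\pd_m,\pd_{\bar j})$ and act on $W=\pd_k$. Of the three torsion terms $\curvature(T(\cdot,\cdot),\cdot)$, two vanish by $T^{1,1}=0$, leaving only $\curvature(T(\pd_i,\pd_m),\pd_{\bar j})\pd_k$; similarly, one of the three covariant derivative terms is killed because $\curvature(\pd_i,\pd_m)$ is of type $(2,0)$ and contributes nothing after tracing against an antiholomorphic index, so only the two remaining terms survive, giving $\n_m\curvature_{i\bar j k\bar l}-\n_i\curvature_{m\bar j k\bar l}=T^p_{im}\curvature_{p\bar j k\bar l}$ (up to a sign convention on $T^p_{im}$). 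The conjugate identity is obtained by choosing the triple $(\pd_{\bar j},\pd_{\bar n},\pd_i)$. The only delicate point is to keep consistent sign conventions for $T$ and $\curvature$ throughout, so the main obstacle here is bookkeeping rather than conceptual.
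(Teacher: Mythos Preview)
Your approach is correct and matches the paper's: the paper does not give a proof at all but simply cites \cite[Ch.\,III, Thm.\,5.3]{ko-no-63} for the general Bianchi identities with torsion, and you are carrying out exactly that specialization using $T^{1,1}=0$ and the fact that the Chern curvature is of type $(1,1)$. Two small slips in wording to clean up: in the first algebraic identity the term $(\n_{\bar j}T)(\pd_k,\pd_i)$ is the one that \emph{survives}, not one that drops (the vanishing $(\n T)$ terms are those with a mixed-type argument pair, namely $(\n_iT)(\pd_{\bar j},\pd_k)$ and $(\n_kT)(\pd_i,\pd_{\bar j})$); and in the differential identity, $(\n_{\bar j}\curvature)(\pd_i,\pd_m)$ vanishes simply because $\curvature^{2,0}\equiv 0$ for the Chern connection, not because of any tracing.
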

	
	\begin{definition}\label{def:chern_ricci}
		The \emph{Chern-Ricci form} of $(M,g)$ is the curvature 2-form $\rho\in\Lambda^{1,1}(M)$ of the line bundle $\Lambda^{n}(T^{1,0}M)$ equipped with the metric $g$. Form $\rho$ is closed and represents class $2\pi c_1(M)$ in $H^2(M,\R)$. In coordinates, $\rho$ is represented by a contraction of the Chern curvature: $\rho_{i\bar j}=\Omega_{i\bar j k}^{\ \ \ k}$.
	\end{definition}
	
	Next, we define the key \emph{curvature semipositivity} notion for a Hermitian manifold $(M,g)$~--- semipositivity in the sense of Griffiths. One might think of it as an analogue of semipositive sectional curvature in the Riemannian setting, or, semipositivity of HBC in the K\"ahler setting.
	
	\begin{definition}\label{def:gr_semipositive}
		Hermitian manifold $(M,g)$ is \emph{Griffiths semipositive} if its Chern curvature satisfies
		\[
		\Omega(\xi,\bar{\xi},\eta,\bar{\eta})\ge 0
		\]
		for any $\xi,\eta\in T^{1,0}M$. In what follows, we will write simply $\Omega\ge_{\mathrm{Gr}} 0$.
	\end{definition}

	\section{Geometry of the Torsion-Twisted Connection}\label{sec:torsion-twisted-connection}
	
	In this section we define and study geometric properties of the torsion-twisted connection $\n^T$. This connection was introduced in~\cite{us-16} as a tool for studying maximum principle for tensors under Hermitian Curvature Flow. Below we prove that the maximal $\n^T$-parallel subbundle $\mc F\subset T^{1,0}M$ defines an infinitesimal holomorphic, isometric action of a complex Lie group.
		
	\begin{definition}[Torsion-twisted connection]\label{def:torsion-twisted}
		Let $(M,g)$ be a Hermitian manifold, and denote by $\n$ its Chern connection. \emph{Torsion-twisted connection} $\n^T$ is defined by the identity:
		\[
		\n^T_X Y=\n_X Y-T(X,Y),
		\]
		where $T\in\Lambda^2(M)\otimes TM$ is the torsion of the Chern connection. We extend $\n^T$ to the connection on $T_\C M$ and all associated tensor bundles in an obvious way.
	\end{definition}
	
	\begin{remark}
		The definition of the Chern connection implies that the (1,1)-type part of the torsion tensor $T$ vanishes, therefore $\n^TJ=0$, and $(\n^T)^{0,1}=\n^{0,1}=\bar{\pd}$ on $\Gamma(T^{1,0}M)$. It is also clear that for $X,Y\in \Gamma(TM)$, we have
		\[
		\n^T_X Y=\n_Y X+[X,Y].
		\]
	\end{remark}
	
	The following elementary proposition highlights the relevance of the torsion-twisted connection for the study of Hermitian geometry on $(M,g)$.
	\begin{proposition}\label{prop:killing}
		Vector field $\zeta\in\Gamma(T^{1,0}M)$ is $\n^T$-parallel:
		\[
		\n^T \zeta=0,
		\]
		if and only if $\zeta$ is a holomorphic Killing vector field.
	\end{proposition}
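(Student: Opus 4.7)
The plan is to split $\n^T\zeta=0$ by Hodge type and match each piece with one of the two defining properties of a holomorphic Killing field. The $(0,1)$-part is immediate: the remark after Definition~\ref{def:torsion-twisted} identifies $(\n^T)^{0,1}$ with $\bar\pd$ on $\Gamma(T^{1,0}M)$, so $\n^T_{\bar X}\zeta=0$ for every $X\in T^{1,0}M$ is exactly $\bar\pd\zeta=0$, i.e.\ $\zeta$ being holomorphic.

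The heart of the argument is the pointwise identity
\[
(\mathcal L_\zeta g)(Y,Z)=g(\n^T_Y\zeta,Z)+g(Y,\n^T_Z\zeta),\qquad Y,Z\in T_\C M,
\]
with $\mathcal L$ and $g$ extended $\C$-linearly. To derive it I would expand $(\mathcal L_\zeta g)(Y,Z)=\zeta g(Y,Z)-g([\zeta,Y],Z)-g(Y,[\zeta,Z])$, use that the Chern connection is metric to rewrite $\zeta g(Y,Z)=g(\n_\zeta Y,Z)+g(Y,\n_\zeta Z)$, and then replace $\n_\zeta Y-[\zeta,Y]=\n_Y\zeta+T(\zeta,Y)=\n_Y\zeta-T(Y,\zeta)=\n^T_Y\zeta$ using the definition of torsion and its antisymmetry. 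The forward direction drops out immediately: $\n^T\zeta=0$ forces $\mathcal L_\zeta g=0$, so $\zeta$ is Killing, and the $(0,1)$-piece makes it holomorphic.

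For the converse, suppose $\zeta$ is holomorphic and Killing. Holomorphicity already gives $\n^T_{\bar W}\zeta=0$ for every $W\in T^{1,0}M$, and it remains to show that $\n^T_Y\zeta=0$ for $Y\in T^{1,0}M$. Since $\n^T$ preserves the type decomposition, $\n^T_Y\zeta\in T^{1,0}M$. Plugging the mixed pair $(Y,\bar W)$ into the Killing identity, the second term vanishes by holomorphicity and we are left with $g(\n^T_Y\zeta,\bar W)=0$ for every $W\in T^{1,0}M$. The restriction of $g$ to $T^{1,0}M\otimes T^{0,1}M$ is the Hermitian inner product and is therefore nondegenerate, so $\n^T_Y\zeta=0$, as required.

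The step demanding most care is the Killing identity above---it is essentially bookkeeping, but one must be consistent about the antisymmetry of $T$ and about the meaning of $\mathcal L_\zeta g=0$ for complex $\zeta$ (equivalently, both $\mathrm{Re}\,\zeta$ and $J\,\mathrm{Re}\,\zeta$ are real Killing fields). Beyond that, the proof is purely a type-decomposition argument combined with the nondegeneracy of the Hermitian pairing; no genuine geometric obstacle is hidden.
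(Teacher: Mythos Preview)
Your proof is correct and follows essentially the same route as the paper: both identify $(\n^T)^{0,1}\zeta=0$ with holomorphicity, and both expand $\mc L_\zeta g$ using metric compatibility of the Chern connection to arrive at $(\mc L_\zeta g)(\xi,\bar\eta)=g(\n^T_\xi\zeta,\bar\eta)+g(\xi,\n^T_{\bar\eta}\zeta)$, then invoke nondegeneracy of the Hermitian pairing. The only cosmetic difference is that you first record the identity for arbitrary $Y,Z\in T_\C M$ and then specialize, whereas the paper works directly with the mixed pair $(\xi,\bar\eta)$; the underlying computation is identical.
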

	\begin{proof}
		Since $\n^T$ is compatible with the holomorphic structure on $T^{1,0}M$, the vanishing of $(\n^T)^{0,1} \zeta$ is equivalent to the fact that $\zeta$ is holomorphic.
		
		Now let us prove that holomorphic vector field $\zeta$ is Killing if and only if $(\n^T)^{1,0}\zeta=0$. For $\xi,\eta\in \Gamma(T^{1,0}M)$ we have
		\begin{equation}
		\begin{split}
		(\mc L_\zeta g)(\xi,\bar{\eta})
			&=\zeta\cdot{g(\xi,\bar{\eta})}-g([\zeta,\xi],\bar{\eta})-g(\xi,[\zeta,\bar{\eta}])\\
			&=g(\n_\zeta \xi,\bar{\eta})+g(\xi,\n_{\zeta} {\bar{\eta}})-g([\zeta,\xi],\bar{\eta})-g(\xi,[\zeta,\bar{\eta}])\\
			&=g(\n^T_\xi \zeta,\bar{\eta})+g(\xi,\n^T_{\bar{\eta}}\zeta)=g(\n^T_\xi \zeta,\bar{\eta}).
		\end{split}
		\end{equation}
		Hence, holomorphic vector field $\zeta$ is Killing if and only if $(\n^T)^{1,0}\zeta=0$.
	\end{proof}
	
	Let $\Omega^T$ be the curvature of the torsion-twisted connection on $T^{1,0}M$. In what follows, we will need an explicit expression for $\Omega^T$ through $\Omega$ and $T$.
	
	\begin{proposition}\label{prop:torsion_twisted_curvature}
		Torsion-twisted curvature $\curvature^T\in \Lambda^2(M)\otimes\End(T^{1,0}M)$ is of the complex type $(2,0)+(1,1)$ and has components
		\begin{equation}\label{eq:torsion_twisted_curvature}
		\begin{split}
		\Omega^T(\xi,\bar\eta)\zeta&=\curvature(\zeta,\bar{\eta})\xi,\\
		\Omega^T(\xi,\eta)\zeta&=\n_\zeta T(\xi,\eta),
		\end{split}
		\end{equation}
		where $\xi,\eta,\zeta\in T^{1,0}M$.
	\end{proposition}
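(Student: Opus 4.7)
The natural starting point is the general formula for how curvature transforms under a tensorial perturbation of a connection. Writing $\nabla^T = \nabla + A$ with $A_X Y := -T(X,Y)$, a direct expansion of $\nabla^T_X\nabla^T_Y Z - \nabla^T_Y\nabla^T_X Z - \nabla^T_{[X,Y]}Z$ yields
\[
\Omega^T(X,Y)Z = \Omega(X,Y)Z - (\nabla_X T)(Y,Z) + (\nabla_Y T)(X,Z) - T(T(X,Y),Z) + T(X,T(Y,Z)) - T(Y,T(X,Z)),
\]
where the torsion of the original connection $\nabla$ appears through the identity $A_{\nabla_X Y - \nabla_Y X - [X,Y]} = A_{T(X,Y)}$. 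I would then establish the three assertions of the proposition by specializing $X,Y,Z$ to pure-type tangent vectors and applying the Bianchi identities already recorded in the excerpt.

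For the $(0,2)$-component, take $X=\bar\xi$, $Y=\bar\eta$, $Z=\zeta\in T^{1,0}M$. Since the Chern torsion has no $(1,1)$-part, $T(\bar\xi,\zeta)=T(\bar\eta,\zeta)=0$, and $T(\bar\xi,\bar\eta)\in T^{0,1}M$ kills the remaining torsion product; the two $(\nabla T)$-terms vanish by type-preservation of $\nabla$, and $\Omega(\bar\xi,\bar\eta)\zeta=0$ is just the $(1,1)$-type statement for the Chern curvature of the holomorphic bundle $T^{1,0}M$. This eliminates the $(0,2)$-piece. For the $(1,1)$-piece, with $\xi,\zeta\in T^{1,0}M$ and $\bar\eta\in T^{0,1}M$, all but one of the torsion-related terms vanish for the same type reasons, reducing the formula to $\Omega^T(\xi,\bar\eta)\zeta = \Omega(\xi,\bar\eta)\zeta + (\nabla_{\bar\eta} T)(\xi,\zeta)$. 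The first Bianchi identity $\Omega_{i\bar j k\bar l} = \Omega_{k\bar j i\bar l} + \nabla_{\bar j} T_{ki\bar l}$, rewritten as $\Omega(\xi,\bar\eta)\zeta - \Omega(\zeta,\bar\eta)\xi = (\nabla_{\bar\eta}T)(\zeta,\xi) = -(\nabla_{\bar\eta}T)(\xi,\zeta)$, then immediately gives $\Omega^T(\xi,\bar\eta)\zeta = \Omega(\zeta,\bar\eta)\xi$.

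The $(2,0)$-case is where the bookkeeping is heaviest. With $\xi,\eta,\zeta\in T^{1,0}M$, the $\Omega$-term drops out because the Chern curvature has no $(2,0)$-component, and one is left with two $(\nabla T)$-terms and three cubic torsion terms. The second Bianchi identity for the torsion, read coordinate-freely as the cyclic identity
\[
(\nabla_\xi T)(\eta,\zeta) + (\nabla_\eta T)(\zeta,\xi) + (\nabla_\zeta T)(\xi,\eta) = T(\zeta, T(\xi,\eta)) + T(\xi, T(\eta,\zeta)) + T(\eta, T(\zeta,\xi)),
\]
can be solved for $-(\nabla_\xi T)(\eta,\zeta)+(\nabla_\eta T)(\xi,\zeta)$ in terms of $(\nabla_\zeta T)(\xi,\eta)$ together with cubic torsion expressions; after applying the antisymmetry of $T$ to the latter, they cancel exactly the three cubic terms left over from the formula above, yielding $\Omega^T(\xi,\eta)\zeta=(\nabla_\zeta T)(\xi,\eta)$. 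I expect this final cancellation, purely a matter of sign bookkeeping, to be the only non-mechanical step of the proof; no geometric input is required beyond the two Bianchi identities and the type of the Chern curvature.
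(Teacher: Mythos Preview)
Your argument is correct and follows essentially the same route as the paper: both proofs reduce to the same Bianchi-identity cancellations, with the only organizational difference being that you first derive the general curvature-perturbation formula for $\nabla^T=\nabla+A$ and then specialize by type, whereas the paper expands the commutators $[\nabla_\xi-T_\xi,\nabla_\eta-T_\eta]$ directly on coordinate holomorphic vector fields for each type separately (and dispatches the $(0,2)$ piece via $(\nabla^T)^{0,1}=\bar\partial$, hence $(\Omega^T)^{0,2}=\bar\partial^2=0$).
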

	\begin{proof}
		The torsion-twisted connection is compatible with the holomorphic structure, therefore the (0,2)-type part of curvature vanishes: ${(\curvature^T)}^{0,2}=\bar{\pd}^2=0$.
		
		For a vector $w\in T_\C M$, define an endomorphism
		\[
		T_w\colon T_\C M\to T_\C M,\quad v\mapsto T(w,v).
		\]
		Since the (1,1)-type part of $T$ vanishes, for $\xi\in T^{1,0}M$, the endomorphism $T_\xi$ is zero on $T^{0,1}M$ and maps $T^{1,0}M$ into $T^{1,0}M$. Now, assume that $\xi,\eta,\zeta$ are local coordinate holomorphic vector fields. Then for the type (1,1) part we have
		\begin{equation}
		\begin{split}
		\curvature_{\n^T}(\xi,\bar{\eta})\zeta{=}&
		[\n_{\xi}-T_\xi,\n_{\bar\eta}-T_{\bar\eta}]\zeta=
		\bigl(
		\curvature(\xi,\bar{\eta})-[\n_\xi,T_{\bar{\eta}}]+[\n_{\bar{\eta}},T_\xi]+[T_\xi,T_{\bar\eta}]
		\bigr)\zeta
		\\{=}&
		\curvature(\xi,\bar{\eta})\zeta+\n_{\bar{\eta}}(T_\xi\zeta)
		{=}
		\curvature(\xi,\bar{\eta})\zeta+(\n_{\bar\eta} T)(\xi,\zeta),
		\end{split}
		\end{equation}
		where in the third equality we use that (i) $[\xi,\bar{\eta}]=0$; (ii) $\n_{\bar{\eta}}$ annihilates any holomorphic vector field, (iii) $T_{\bar{\eta}}$ annihilates any (1,0) vector. By the first Bianchi identity, the final expression equals $\curvature(\zeta,\bar{\eta})\xi$.
		
		Similarly, we compute the (2,0) part of the curvature $\curvature_{\n^T}$, using the fact that the (2,0) part of the Chern curvature vanishes.
		\begin{equation}
		\begin{split}
		\curvature_{\n^T}(\xi,\eta)\zeta&=
		[\n_{\xi}-T_{\xi},\n_{\eta}-T_{\eta}]\zeta=
		\bigl(
		[\n_\xi,\n_\eta]+[\n_\eta,T_\xi]-[\n_\xi,T_\eta]+[T_\xi,T_\eta]
		\bigr)\zeta\\&=
		\n_{\eta}(T(\xi,\zeta))-T(\xi,\n_\eta\zeta)-
		\n_\xi(T(\eta,\zeta))+T(\eta,\n_\xi\zeta)+
		T(\xi,T(\eta,\zeta))-T(\eta,T(\xi,\zeta))\\&=
		\n_\xi T(\zeta,\eta)+\n_\eta T(\xi,\zeta)+T(\n_\eta\xi-\n_\xi\eta,\zeta)+T(\xi,T(\eta,\zeta))-T(\eta,T(\xi,\zeta))\\&=
		\n_\xi T(\zeta,\eta)+\n_\eta T(\xi,\zeta)+T(T(\eta,\xi),\zeta)+T(T(\zeta,\eta),\xi)+T(T(\xi,\zeta),\eta)=\n_{\zeta}T(\xi,\eta),
		\end{split}
		\end{equation}
		where in the last identity we use the (3,0) type part of the first Bianchi identity.
	\end{proof}
	
	For a base point $x\in M$, we denote by 
	\[
	\mathrm{Hol}_x(\n^T)\subset GL(T_x^{1,0}M)\simeq GL_n(\C)
	\]
	the \emph{holonomy group} of $\n^T$, that is the group generated by $\n^T$-parallel transport along all smooth loops in $M$ based at $x\in M$. Let $\mathrm{Hol}_x^0(\n^T)$ be the \emph{restricted holonomy group} of $\n^T$, that is generated by parallel transport along \emph{contractible} loops. Clearly $\mathrm{Hol}_x(\n^T)=\mathrm{Hol}_x^0(\n^T)$ if $M$ is simply-connected.
	
	Finally, let $\mf{hol}_x(\n^T)\subset \End(T_x^{1,0}M)$ be the Lie algebra of $\mathrm{Hol}_x^0(\n^T)$.
	The classical Ambrose-Singer theorem relates the holonomy Lie algebra and the curvature tensor.
	\begin{theorem}[Ambrose-Singer \cite{am-si-53}]\label{thm:ambrose-singer}
		The holonomy Lie algebra $\mf{hol}_x(\n^T)$ is spanned by all the elements of $\End(T_x^{1,0}M)$ of the form
		\[
		\mc P_\gamma^{-1}\circ\Omega_y^T(X,Y) \circ \mc P_\gamma,
		\]
		where $y\in M$, $X,Y\in T_yM$, $\gamma$ is a smooth path from $x$ to $y$ in $M$, and
		\[
		\mc P_\gamma\colon T_x^{1,0}M\to T_y^{1,0}M
		\]
		is the parallel transport map.
	\end{theorem}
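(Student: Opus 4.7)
\emph{Proof proposal.} The statement is the classical Ambrose–Singer theorem~\cite{am-si-53} applied to the linear connection $\n^T$ on the rank-$n$ complex vector bundle $T^{1,0}M$. My plan is to reproduce the standard argument, which only uses that $\n^T$ is a linear connection compatible with the complex structure and makes no use whatsoever of vanishing torsion, so that it transfers verbatim from the Chern or Levi-Civita setting to $\n^T$. Denote by $\pi\colon P\to M$ the principal $GL_n(\C)$-bundle of $\C$-linear frames of $T^{1,0}M$, equipped with the principal connection induced by $\n^T$; then $\mf{hol}_x(\n^T)$ coincides with the Lie algebra of the holonomy group based at any frame $u_0\in\pi^{-1}(x)$.

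Let $\mf g_x\subset\End(T_x^{1,0}M)$ denote the linear span of the endomorphisms $\mc P_\gamma^{-1}\circ\Omega^T_y(X,Y)\circ\mc P_\gamma$ appearing in the statement. For the inclusion $\mf g_x\subseteq \mf{hol}_x(\n^T)$ I would argue geometrically: parallel transport along a small geodesic parallelogram in $T_yM$ spanned by $sX,sY$ differs from the identity by $s^2\,\Omega^T_y(X,Y)+O(s^3)$, and conjugating by $\mc P_\gamma$ followed by $s\to 0$ rescaling in the Lie algebra yields every generator of $\mf g_x$ as a genuine element of $\mf{hol}_x(\n^T)$. (The small-loop calculation is the usual Taylor expansion of parallel transport along a coordinate square and does not know about torsion.)

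The substantive direction is $\mf{hol}_x(\n^T)\subseteq\mf g_x$. Following Ambrose–Singer, I would define, at each $u\in P$, the subspace
\[
D_u := H_u \oplus \widetilde{u\cdot \mf g_{\pi(u)}} \subset T_u P,
\]
where $H_u$ is the $\n^T$-horizontal subspace and the second summand is the image of $\mf g_{\pi(u)}$ under the vertical identification via $u$. Two facts must then be verified. First, $\mf g_x$ is a Lie subalgebra of $\End(T_x^{1,0}M)$: this follows because $\n^T$-parallel transport intertwines the $\mf g$'s at different base points, together with the identity $[\Omega^T(X,Y),\Omega^T(X',Y')]\in \mf g$ obtained by differentiating $\Omega^T$ along a suitable concatenation of paths. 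Second, the distribution $D$ is involutive: the brackets of horizontal vector fields land in the vertical part by the very definition of curvature, the brackets of two vertical pieces are controlled by the Lie algebra property of $\mf g$, and the mixed brackets produce derivatives of $\Omega^T$ transported along horizontal curves, which again lie in $\mf g$ by construction. Frobenius then produces an integral leaf $P_0\subset P$ through $u_0$; it is the holonomy subbundle, its structure group has Lie algebra exactly $\mf g_x$, and the reverse inclusion follows.

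The main technical point I expect to spend care on is the involutivity of $D$: checking that the mixed horizontal–vertical brackets remain in $D$ uses the second Bianchi identity, and the torsion-twisted analogue of Bianchi II must be derived from Proposition~\ref{prop:torsion_twisted_curvature} (it will carry extra terms involving $T$ and $\n T$, but all such terms are tangential to $D$ because they are again vertical endomorphisms arising from the curvature expressions in~\eqref{eq:torsion_twisted_curvature}). Aside from this bookkeeping, the proof is identical to the torsion-free case, which is the reason only the classical reference is cited.
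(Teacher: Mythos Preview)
The paper does not prove Theorem~\ref{thm:ambrose-singer}; it is the classical Ambrose--Singer theorem, stated with a citation to~\cite{am-si-53} and used as a black box. Your sketch is the standard argument and is correct in outline---and since Ambrose--Singer holds for an arbitrary linear connection on a vector bundle, no special adaptation to the torsion-twisted setting is required.

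One correction worth noting: the involutivity of $D$ does not need any Bianchi identity. After passing to the holonomy subbundle $P(u_0)\subset P$, the span of the curvature endomorphisms is a \emph{fixed} subalgebra $\mf g'\subset\mf{gl}_n(\C)$, independent of the point (this is exactly what ``parallel transport intertwines the $\mf g$'s'' says on the frame bundle). The vertical part of $D$ is then generated by fundamental vector fields $A^*$ for $A\in\mf g'$ constant. Since horizontal lifts of vector fields on $M$ are right-invariant, $[A^*,\widetilde{B}]=0$ for every horizontal lift $\widetilde{B}$, so the mixed brackets vanish identically---no derivatives of $\Omega^T$ appear, and no second Bianchi identity (torsion-twisted or otherwise) is needed.
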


	Define $\mc F_x\subset T_x^{1,0}M$
	\[
	\mc F_x:=\{\xi\in T_x^{1,0}M\ |\ \mathrm{Hol}_x(\n^T)\xi=\xi\}
	\]
	to be the set of all vectors in $T_x^{1,0}M$ fixed by $\mathrm{Hol}_x(\n^T)$. Clearly, $\mc F=\bigcup_x\mc F_x\subset T^{1,0}M$ is a subbundle of $T^{1,0}M$ invariant under the $\n^T$-parallel transport. Moreover
	\begin{enumerate}
		\item any $\xi \in \mc F_x$ extends to a $\n^T$-parallel global vector field $\xi\in\Gamma(T^{1,0}M)$;
		\item any $\n^T$-parallel vector field $\xi\in\Gamma(T^{1,0}M)$ is a section of $\mc F$.
	\end{enumerate}
	 Proposition~\ref{prop:killing} implies that $\mc F$ is a holomorphic subbundle of $T^{1,0}M$. Now we prove that $\mc F$ has extra structure.
	\begin{proposition}\label{prop:integrable_distribution}
		Holomorphic distribution $\mc F\subset T^{1,0}M$ is Frobenius-integrable, i.e.,
		\[
		[\mc F,\mc F]\subset \mc F.
		\]
		Moreover, if $\xi_1,\dots,\xi_r$ is a $\n^T$-parallel basis of $\mc F$, then
		\[
		[\xi_i,\xi_j]=c_{ij}^k\xi_k,
		\]
		with the structure constants $c_{ij}^k=c_{ij}^k(x)$ independent of $x\in M$.
	\end{proposition}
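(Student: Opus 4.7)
The plan is to leverage Proposition~\ref{prop:killing}, which identifies global $\n^T$-parallel sections of $T^{1,0}M$ with holomorphic Killing vector fields. By the definition of $\mc F_x$ via $\mathrm{Hol}_x(\n^T)$, every $\xi\in\mc F_x$ extends to such a global section, so $\Gamma(\mc F)$ is exactly the space of $\n^T$-parallel $(1,0)$ vector fields, equivalently the space of holomorphic Killing fields whose values at any fixed $x$ lie in $\mc F_x$. With this translation in hand, both claims reduce to classical algebraic properties of Killing vector fields; the main (modest) point is simply to make the translation explicit.

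For the integrability $[\mc F,\mc F]\subset\mc F$, I would pick $\xi,\eta\in\Gamma(\mc F)$ and observe that the bracket $[\xi,\eta]$ is of type $(1,0)$ and holomorphic, because the bracket of two holomorphic $(1,0)$ vector fields is again a holomorphic $(1,0)$ vector field. It is also Killing, since
\[
\mc L_{[\xi,\eta]}g=[\mc L_\xi,\mc L_\eta]g=\mc L_\xi(\mc L_\eta g)-\mc L_\eta(\mc L_\xi g)=0.
\]
Invoking Proposition~\ref{prop:killing} in the reverse direction shows that $[\xi,\eta]$ is $\n^T$-parallel, hence a section of $\mc F$.

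For the constancy of the structure coefficients, fix a $\n^T$-parallel basis $\xi_1,\dots,\xi_r$ of $\mc F$ and expand $[\xi_i,\xi_j]=c_{ij}^k\xi_k$ with uniquely determined complex-valued functions $c_{ij}^k$ on $M$. Applying $\n^T$ to both sides and using $\n^T\xi_k=0$ together with $\n^T[\xi_i,\xi_j]=0$ (from the previous step), the Leibniz rule yields
\[
0=\sum_k (dc_{ij}^k)\otimes \xi_k.
\]
Pointwise linear independence of the $\xi_k$ forces $dc_{ij}^k=0$, so each $c_{ij}^k$ is locally constant and therefore constant on each connected component of $M$, as required.
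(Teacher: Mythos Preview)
Your argument is correct and is genuinely different from the paper's. You exploit Proposition~\ref{prop:killing} to translate ``$\n^T$-parallel'' into ``holomorphic Killing'', then use the classical closure of holomorphic Killing fields under the Lie bracket, and translate back. The paper instead works entirely on the connection side: it first notes that for $\n^T$-parallel $\xi,\eta$ one has $[\xi,\eta]=-T(\xi,\eta)$, and then verifies $\n^T_\zeta(T(\xi,\eta))=0$ and $\n^T_{\bar\zeta}(T(\xi,\eta))=0$ by direct computation with the Bianchi identities, the formula $\Omega^T(\cdot,\cdot)\xi=\n_\xi T(\cdot,\cdot)$ from Proposition~\ref{prop:torsion_twisted_curvature}, and the Ambrose--Singer theorem (so that $\Omega^T(\cdot,\cdot)$ annihilates $\n^T$-parallel vectors). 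Your route is shorter and needs neither Proposition~\ref{prop:torsion_twisted_curvature} nor Ambrose--Singer; the paper's route, on the other hand, makes explicit the mechanism $\Omega^T(\cdot,\cdot)\xi=0$ for $\xi\in\mc F$, which is precisely the link to holonomy that is reused later (e.g.\ in the proof of Theorem~\ref{thm:main}). One small wording issue: you write ``$\Gamma(\mc F)$ is exactly the space of $\n^T$-parallel $(1,0)$ vector fields'', but $\Gamma(\mc F)$ denotes all smooth sections; what you mean (and use) is that $\mc F$ is globally trivialized by $\n^T$-parallel sections, and checking integrability on such a frame suffices. Your argument for the constancy of the $c_{ij}^k$ is the same as the paper's (implicit) one.
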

	\begin{proof}
		Let $\xi,\eta\in\Gamma(\mc F)$ be two $\n^T$-parallel sections of $\mc F$. We claim that
		\[
		\n^T[\xi,\eta]=0.
		\]
		This will prove the proposition.
		
		First, observe that, since $\n^T_\xi\eta=\n^T_\eta\xi=0$, there is an identity
		\[
			[\xi,\eta]=-T(\xi,\eta).
		\]
		We will prove that $\n^T(T(\xi,\eta))=0$ by computing the covariant derivative separately in (1,0) and (0,1)-type directions.
		\begin{enumerate}
		\item Let $\zeta$ be a (1,0)-vector. Then using the fact that $\xi$ and $\eta$ are $\n^T$-parallel, and applying the (3,0)-part of the first Bianchi identity, we compute:
		\begin{equation}
		\begin{split}
		\n^T_{\zeta}(T(\xi,\eta))&
			=\n_\zeta(T(\xi,\eta))
				-T(\zeta,T(\xi,\eta))\\&
			=(\n_\zeta T)(\xi,\eta)
				+T(\n_\zeta\xi,\eta)
				+T(\xi,\n_\zeta\eta)
				-T(\zeta,T(\xi,\eta))\\&
			=(\n_\zeta T)(\xi,\eta)
				+T(T(\zeta,\xi),\eta)
				+T(T(\eta,\zeta),\xi)
				+T(T(\xi,\eta),\zeta)\\&
			=(\n_\xi T)(\zeta,\eta)
				+(\n_\eta T)(\xi,\zeta).
		\end{split}
		\end{equation}
		Both summands in the final expression vanish. Indeed, by Proposition~\ref{prop:torsion_twisted_curvature} we have
		\[
			(\n_\xi T)(\zeta,\eta)=\Omega^T(\zeta,\eta)\xi,
		\]
		and, by Ambrose-Singer theorem, $\Omega^T(\zeta,\eta)\in\mf{hol}_x(\n^T)\otimes\C$. Vector field $\xi$ is $\n^T$-parallel, hence is annihilated by the holonomy Lie algebra, and $(\n_\xi T)(\zeta,\eta)=0$.
		
		\item Let $\bar{\zeta}$ be a $(0,1)$-vector. Similarly to the above, using the fact that $T^{1,1}=0$, and applying the (2,1)-type part of the first Bianchi identity we find:
		\begin{equation}
		\begin{split}
		\n^T_{\bar\zeta}(T(\xi,\eta))&
		=\n_{\bar\zeta}(T(\xi,\eta))
		=(\n_{\bar\zeta}T)(\xi,\eta)\\&
		=\Omega(\eta,\bar{\zeta})\xi-\Omega(\xi,\bar{\zeta})\eta\\&
		=\Omega^T(\xi,\bar{\zeta})\eta-\Omega^T(\eta,\bar{\zeta})\xi.
		\end{split}
		\end{equation}
		For the same reason, as in the first part, both terms in the final expression vanish.
		\end{enumerate}
	\end{proof}
	
	Proposition~\ref{prop:integrable_distribution} implies that $\mc F$ is spanned by $\n^T$-parallel vector fields, which form a finite-dimensional complex Lie algebra $\mf g$ of dimension $\dim_\C\mf g=\dim_\C\mc F_x$. The injective homomorphism $\mf g\to \Gamma(T^{1,0}M)$ induces an infinitesimal action of the corresponding complex Lie group $G$ with discrete isotropy subgroups (such an action is called \emph{almost free}). With this data we can reformulate Proposition~\ref{prop:integrable_distribution} as follows.
	\begin{theorem}\label{thm:G_action}
		Let $(M,g)$ be a Hermitian manifold with a holomorphic distribution $\mc F\subset T^{1,0}M$
		\[
		\mc F_x:=\{\xi\in T_x^{1,0}M\ |\ \mathrm{Hol}_x(\n^T)\xi=\xi\}.
		\]
		Then there exists a complex Lie group $G$ acting almost freely, holomorphically and isometrically on~$(M,g)$ such that at any $x\in M$
		\[
		T_x^{1,0}(G_\cdot x)=\mc F_x,
		\]
		where $G_\cdot x$ is the orbit of $x$. In particular, distribution $\mc F$ is integrable, and the leaves of the foliation generated by $\mc F$ coincide with the orbits of $G$.
	\end{theorem}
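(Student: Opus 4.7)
The ingredients for the proof are already in place: Proposition~\ref{prop:killing} identifies $\n^T$-parallel sections of $T^{1,0}M$ with holomorphic Killing vector fields, and the proof of Proposition~\ref{prop:integrable_distribution} exhibits the space $\mf g\subset\Gamma(T^{1,0}M)$ of $\n^T$-parallel sections of $\mc F$ as a complex subalgebra whose bracket has constant structure coefficients in any $\n^T$-parallel frame. The plan is to convert this algebraic data into an honest complex-Lie-group action. First, I would verify that the evaluation map $\mathrm{ev}_x\colon\mf g\to\mc F_x$, $\zeta\mapsto\zeta(x)$, is a $\C$-linear isomorphism at every $x\in M$: surjectivity follows from the parallel-transport definition of $\mc F_x$, while injectivity follows because a $\n^T$-parallel vector field vanishing at one point vanishes identically. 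In particular $\dim_\C\mf g=\dim_\C\mc F_x<\infty$.

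Next, I would pass to the underlying real Lie algebra $\mf g_\R=\{\xi+\bar\xi\mid\xi\in\mf g\}\subset\Gamma(TM)$. Because $i\zeta\in\mf g$ whenever $\zeta\in\mf g$, the almost complex structure $J$ preserves $\mf g_\R$, and $\C$-bilinearity of the bracket on $\mf g$ translates into the compatibility $[X,JY]=J[X,Y]$ for $X,Y\in\mf g_\R$. Proposition~\ref{prop:killing}, applied to the real vector field $\xi+\bar\xi$, shows that every element of $\mf g_\R$ is a real Killing field whose flow preserves $J$. Assuming $(M,g)$ is metrically complete (for instance compact, as in the intended application), Killing fields are complete, so by Palais' integration theorem the simply connected real Lie group $G$ with Lie algebra $\mf g_\R$ acts on $M$ by biholomorphic isometries. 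The identity $[X,JY]=J[X,Y]$ promotes the left-invariant almost complex structure on $G$ to a bi-invariant integrable one, so $G$ is canonically a complex Lie group and the action is holomorphic in the complex-Lie-group sense.

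For the orbit structure, at each $x\in M$ the infinitesimal action $\mf g_\R\otimes\C\to T_{\C,x}M$ is the complexification of $\mathrm{ev}_x$, so its image is $\mc F_x\oplus\bar{\mc F}_x$. Extracting the $(1,0)$-part gives $T_x^{1,0}(G.x)=\mc F_x$, and the tangent real distribution to the orbit foliation coincides with the distribution underlying the already-integrable $\mc F$, so the $G$-orbits are the maximal integral leaves. Almost-freeness is immediate: injectivity of $\mathrm{ev}_x$ implies that the isotropy Lie algebra at $x$ is trivial, hence the stabilizer is discrete.

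The step I expect to require the most care is the integration itself. Completeness of the generators is painless on compact $M$ but would need separate attention in the non-compact setting, and one must verify that the bi-invariant almost complex structure on the abstract Lie group $G$ is genuinely integrable — this is precisely the content of the compatibility $[X,JY]=J[X,Y]$ together with a direct application of Newlander--Nirenberg (or equivalently, the classical observation that a left-invariant $J$ on a Lie group is integrable iff the $+i$-eigenspace of $J$ on the complexified Lie algebra is a Lie subalgebra, which here follows from $\C$-linearity of the bracket on $\mf g$).
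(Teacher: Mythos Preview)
Your proposal is correct and follows the same approach as the paper: both use Propositions~\ref{prop:killing} and~\ref{prop:integrable_distribution} to identify $\mf g$ as a finite-dimensional complex Lie algebra of holomorphic Killing fields and then integrate to a group action with discrete stabilizers. The paper treats the theorem as an immediate reformulation of those propositions and does not spell out the integration step; your proposal is considerably more careful there (Palais' theorem, the completeness hypothesis, and the integrability of the left-invariant complex structure on $G$), which is an improvement rather than a deviation.
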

	
	The following example illustrates that any complex group $K$ can appear as group $G$ in Theorem~\ref{thm:G_action}.
	
	\begin{example}\label{ex:homogeneous}
		Consider a complex Lie group $M=K$. Let
		\begin{itemize}
			\item $\{\xi_1,\dots,\xi_n\}$ be a basis of right-invariant holomorphic vector fields (inducing the action of $K$ on itself by multiplication from the \emph{left});
			\item $\{\eta_1,\dots,\eta_n\}$ be a basis of left-invariant holomorphic vector fields (inducing the action of $K$ on itself by multiplication from the \emph{right}).
		\end{itemize}
		Equip $K$ with a right-invariant metric $g$:
		\[
		g(\xi_i,\bar{\xi_j})=g_{i\bar j},
		\]
		where $g_{i\bar j}=g_{i\bar j}(x)$ are constant functions on $M$. For the Chern connection $\n$ on $(K,g)$ we have:
		\[
		0=\zeta\cdot g(\xi_i,\bar{\xi_j})=g(\n_{\zeta}\xi_i,\bar{\xi_j}),\quad\zeta\in T^{1,0}K,
		\]
		therefore $\n\xi_i=0$. Moreover, as the left and right actions of $K$ on itself commute, we have $[\xi_i,\eta_j]=0$. Hence
		\[
		\n^T_{\xi_j}\eta_i=\n_{\eta_i}\xi_j+[\xi_j,\eta_i]=0,
		\]
		so $\{\eta_1,\dots,\eta_n\}$ is a $\n^T$-parallel basis of $T^{1,0}K$. Therefore, for a complex Lie group with a right-invariant metric, $(K,g)$, we have that
		\[
		\mc F=T^{1,0}K,
		\]
		and the group $G$ of Theorem~\ref{thm:G_action} is the group $K$ acting holomorphically and isometrically on itself by multiplication from the right.
		
		Alternatively, one can see that vector fields $\eta_i$, $1\le i\le n$ are $\n^T$-parallel directly from the only if part of Proposition~\ref{prop:killing}, since each $\eta_i$ is holomorphic and Killing by our choice of metric.
	\end{example}
	
	\begin{remark}
		It is instructive to compare Theorem~\ref{thm:G_action} with the corresponding statement about the Levi-Civita connection in the standard (real) Riemannian setting. In that case, $M$ splits isometrically as $M=F\times N$, where $F$ is flat with $\mc F\subset TM$ corresponding to $TF$.
	
		Going back to the Hermitian setting, let $\mc F^\perp$ be the orthogonal complement of $\mc F$. Unlike the Riemannian case, $\mc F^\perp$ is not necessarily integrable.
		
		In general, the holomorphic distribution $\mc F\subset T^{1,0}M$ does not come from (even local) splitting $M\simeq F\times N$, but rather defines a non-singular foliation of $M$ by the orbits of $G$. Locally, at a point $x\in M$ this foliation is modeled by a bundle-like Hermitian submersion
		\[
		\mc U(e; G)\to \mc U(x,M)\to\mc V, 
		\]
		where $\mc U(e; G)$ is a neighbourhood of $e$ in $G$, $\mc U(x; M)$ is an appropriate neighbourhood of $x$ in $M$, and $\mc V$ is a ``local orbit space''.
	\end{remark}
	
	Example~\ref{ex:homogeneous} is trivial from the point of view of the foliation generated by distribution $\mc F$, since $\mc F=T^{1,0}M$ is the whole tangent space, and there is a single $G$-orbit. Now, we construct a more interesting example, by examining torus-invariant metrics on a primary Hopf surface of class 1.
	\begin{example}[Hopf surfaces]
		Consider a primary Hopf surface of class 1:
		\[
		M_{a_1,a_2}:=\bigl(\C^2\backslash\{(0,0)\}\bigr)/\la\gamma\ra,\quad \gamma\colon (z_1,z_2)\mapsto (e^{a_1}z_1,e^{a_2}z_2).
		\]
		where $a_1,a_2\in\C$, $\Re (a_1)\ge\Re(a_2)>0$. 
		There is a 3-dimensional torus $T^3=(S^1)^2\times S^1$ acting on $M=M_{a_1,a_2}$ by biholomorphisms, where the action of $(S^1)^2\simeq (U(1))^2\subset (\C^\times)^2$ is induced by the coordinate-wise multiplication
		\begin{equation}
		\begin{split}
		(S^1)^2\times M
		&\to
		M\\
		(u_1, u_2)\times (z_1,z_2)
		&\mapsto
		(u_1 z_1, u_2 z_2),
		\end{split}
		\end{equation}
		and the action of $S^1$ is given by
		\begin{equation}
		\begin{split}
		S^1\times M
		&\to
		M\\
		e^{2\pi it}\times (z_1,z_2)
		&\mapsto
		(e^{a_1t} z_1, e^{a_2t} z_2),
		\end{split}
		\end{equation}
		
		At a point $x\in M$, let $\mf t_x\subset T_xM$ be the real 3-dimensional subspace generated by the infinitesimal action of $T^3$. Then holomorphic vector field
		\[
		\zeta:=
		\Re(a_1)z_1\frac{\pd}{\pd z_1}
		+
		\Re(a_2)z_2\frac{\pd}{\pd z_2}
		\]
		generates a complex 1-dimensional distribution $\mc F\subset T^{1,0}M$ which corresponds under the natural identification $T^{1,0}M\simeq TM$ to the $J$-invariant subspace $\mf t_x\cap J\mf t_x\subset TM$.
		
		Now let $g$ be any Hermitian metric on $M$. After averaging $g$ with respect to the $T^3$-action, we can assume that $g$ is $T^3$-invariant. Then $\zeta$ is a holomorphic Killing vector field on $(M,g)$, and by Proposition~\ref{prop:killing}, we conclude that $\zeta$ is $\n^T$-parallel. We claim that $\mc F$ is spanned by $\zeta$:
		\[
		\mc F:=\{\xi\in T_x^{1,0}M\ |\ \mathrm{Hol}_x(\n^T)\xi=\xi\}=\C\cdot\zeta
		\]
		Indeed, if there is another linearly independent $\n^T$-parallel vector field, then $T^{1,0}M$ would be holomorphically trivial. This is not the case, since $T^{1,0}M$ admits a not everywhere nonzero holomorphic section, e.g., $z_1\pd/\pd z_1$. 
		
		Therefore the distribution $\mc F$ of Theorem~\ref{thm:G_action} coincides with $\C\cdot\zeta\subset T^{1,0}M$, and the group $G$ is isomorphic to $\C$ acting on $M_{a_1,a_2}$ via
		\begin{equation}
		\begin{split}
		\mathrm{ev}\colon\C\times M&\to M\\
		u\times (z_1,z_2)&\mapsto(e^{\Re(a_1) u}z_1,e^{\Re(a_2) u}z_2).
		\end{split}
		\end{equation}
		Depending on the moduli parameters $a_1,a_2\in \C$, a generic orbit of $G\simeq\C$ is either closed and isomorphic to $(S^1)^2$, or has $T^3$-orbit as its closure, and is isomorphic to one of $\C^\times$ and~$\C$. If the generic orbit is $T^2$, the Hopf surface is the total space of a Seifert fibration over $\C P^1$.
	\end{example}

	\section{Hermitian Curvature Flow}\label{sec:hcf}
	Let $(M,g)$ be a Hermitian manifold of complex dimension $\dim_\C M=n$. Consider an evolution equation for the Hermitian metric $g=g(t)$.
	\begin{equation}\label{eq:HCF}
	\pd_t g_{i\bar j}=-\cric_{i\bar j}-\mc Q_{i\bar j},
	\end{equation}
	where $\cric_{i\bar j}=g^{m\bar n}\curvature_{m\bar ni\bar j}$ is the \emph{second Chern-Ricci curvature} and $\mc Q_{i\bar j}=\frac{1}{2}g^{m\bar n}g^{p\bar s}T_{mp\bar j}T_{\bar n\bar s i}$ is a quadratic torsion term. Flow~\eqref{eq:HCF} is a member of the family of Hermitian Curvature Flows, introduced by Streets and Tian in~\cite{st-ti-11}. It is proved there that all HCFs are strictly parabolic evolution equations for $g$, and, hence, admit short-time solutions. The particular flow~\eqref{eq:HCF} was first considered by the author in~\cite{us-16} (see also~\cite{us-17,us-17-2,us-th}). Further we refer to the flow~\eqref{eq:HCF} as the HCF.
	
	Our primary motivation for considering this member of HCF family comes from the following result.
	\begin{theorem}[{\cite[Theorem~5.1]{us-16}}]\label{thm:hcf-griffiths-preserved}
		Let $g(t), t\in[0,\tau)$ be the solution to the HCF~\eqref{eq:HCF} on a compact complex Hermitian manifold $(M,g_0)$. Assume that the Chern curvature $\Omega(g_0)$ at the initial moment $t=0$ is Griffiths semipositive (resp. positive). Then for $t\in[0,\tau)$ the Chern curvature $\Omega(g(t))$ remains Griffiths semipositive (resp. positive).
	\end{theorem}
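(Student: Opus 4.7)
The natural approach is to apply Hamilton's tensor maximum principle to the Chern curvature tensor $\Omega_{i\bar j k\bar l}$, viewed as a section of $\Lambda^{1,1}(M)\otimes\End(T^{1,0}M)\otimes(T^{1,0}M)^*$ along the HCF. The fiberwise Griffiths semipositive set is a closed convex cone invariant under the induced $U(n)$-action, so to apply the maximum principle it suffices to establish two things: (i) a parabolic evolution equation of the form $\pd_t\Omega = \Delta\Omega + \mc R(\Omega,T)$, and (ii) the tangency (ODE-invariance) condition: whenever $\Omega\ge_{\mathrm{Gr}}0$ and $\Omega(\xi_0,\bar\xi_0,\eta_0,\bar\eta_0)=0$ for some null pair $(\xi_0,\eta_0)\in T^{1,0}_x M\times T^{1,0}_x M$, one has $\mc R(\xi_0,\bar\xi_0,\eta_0,\bar\eta_0)\ge 0$.

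First I would derive the evolution equation by differentiating the formula for $\Omega_{i\bar j k\bar l}$ in $t$ using \eqref{eq:HCF} and commuting with $\n$ via the Bianchi identities stated earlier in the paper. The outcome has the schematic shape
\[
\pd_t\Omega_{i\bar j k\bar l} = g^{p\bar q}\n_p\n_{\bar q}\Omega_{i\bar j k\bar l} + \mc R^{(\Omega)}_{i\bar j k\bar l} + \mc R^{(\Omega,T)}_{i\bar j k\bar l} + \mc R^{(T)}_{i\bar j k\bar l},
\]
where $\mc R^{(\Omega)}$ is Kulkarni--Nomizu-type quadratic in $\Omega$, $\mc R^{(\Omega,T)}$ is linear in each of $\Omega$ and $T$, and $\mc R^{(T)}$ is quadratic in $T$ and $\n T$. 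The specific choice of the torsion term $\mc Q_{i\bar j}=\tfrac{1}{2}g^{m\bar n}g^{p\bar s}T_{mp\bar j}T_{\bar n\bar s i}$ in the definition of the HCF is dictated precisely by the requirement that $\mc R^{(T)}$ assemble into manifestly nonnegative ``squares'' on Griffiths null pairs. To upgrade the scalar evaluation to a tensorial statement, I would pull $\Omega$ back through a time-evolving $\n$-parallel orthonormal frame (an Uhlenbeck-type trick), making the evolution an equation for a section of a $t$-independent bundle so the classical tensor maximum principle applies directly.

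The main obstacle is verification of the tangency condition (ii). At a space-time point $(x_0,t_0)$ where $\phi(x,t):=\inf_{\|\xi\|=\|\eta\|=1}\Omega(\xi,\bar\xi,\eta,\bar\eta)$ first reaches zero on a minimizing pair $(\xi_0,\eta_0)$, the first-order optimality conditions give $\Omega(v,\bar\xi_0,\eta_0,\bar\eta_0)=0$ and $\Omega(\xi_0,\bar\xi_0,w,\bar\eta_0)=0$ for every $v,w\in T^{1,0}_{x_0}M$ (plus their conjugates). The pure curvature reaction $\mc R^{(\Omega)}(\xi_0,\bar\xi_0,\eta_0,\bar\eta_0)$ then collapses, after the vanishing mixed contributions are removed, to a sum of the form $\sum_{\alpha,\beta}\Omega(\xi_0,\bar\xi_0,e_\alpha,\bar e_\beta)\,\Omega(e_\beta,\bar e_\alpha,\eta_0,\bar\eta_0)$, which is nonnegative as a trace of a product of two Hermitian positive semidefinite endomorphisms. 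The mixed $\mc R^{(\Omega,T)}$ contribution reduces, via the second-order Bianchi identity, to terms that either vanish by the first-order conditions or combine with $\mc R^{(T)}$ into expressions of the shape $\sum_\alpha|(\n_{e_\alpha}T)(\xi_0,\eta_0)|^2$ up to a sign matching the choice of $\mc Q$.

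Assembling these pieces yields $\mc R(\xi_0,\bar\xi_0,\eta_0,\bar\eta_0)\ge 0$ on the boundary of the Griffiths cone; Hamilton's tensor maximum principle then gives preservation of semipositivity along the flow. The strict positivity statement follows by the same argument applied to the perturbed tensor $\Omega - \epsilon\,\omega\otimes\omega$ for small $\epsilon>0$ (where $\omega$ is the Kähler form of $g(t)$), combined with a strong parabolic maximum principle to propagate positivity instantaneously from $t=0$.
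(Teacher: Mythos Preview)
The paper does not give its own proof of this statement; it is quoted verbatim from \cite{us-16} and used as a black box. So there is no in-paper argument to compare against.

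Your outline is the correct strategy and matches what the original reference does: compute the evolution of $\Omega$ under \eqref{eq:HCF}, then run a tensor maximum principle by checking the null-vector condition on Griffiths-null pairs. Your description of how the reaction term behaves at a null pair (first-order optimality kills the mixed contractions, the pure $\Omega*\Omega$ piece becomes a trace of a product of two PSD Hermitian forms, and the torsion pieces assemble into squares precisely because of the choice of $\mc Q$) is accurate.

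The one substantive ingredient you omit, and which the present paper repeatedly flags as the technical core of \cite{us-16}, is the torsion-twisted connection $\n^T$. In the original argument the maximum principle is set up so that the parallel-transport invariance and the second-order term are organized via $\n^T$ rather than the Chern connection; this is what the paper means when it says $\n^T$ ``was introduced in \cite{us-16} as a tool for studying maximum principle for tensors under Hermitian Curvature Flow,'' and it is exactly why the strong form (Theorem~5.2 of \cite{us-16}, used in the proof of Theorem~\ref{thm:main}) yields $\n^T$-invariance of the null set $Z_x$ together with the vanishing $\Omega(\xi,\cdot,\cdot,\bar\eta)=0$ and $g(\n_\xi T(\cdot,\cdot),\bar\eta)=0$. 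Your ``Uhlenbeck trick with a $\n$-parallel frame'' is not wrong in spirit, but in the actual proof the bookkeeping that makes the first-order torsion terms disappear and leaves only the manifestly nonnegative pieces goes through $\n^T$; without it, verifying that $\mc R^{(\Omega,T)}$ and $\mc R^{(T)}$ combine as you claim is the hard part, not a consequence of Bianchi alone.
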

	In~\cite{us-17-2} we further strengthened Theorem~\ref{thm:hcf-griffiths-preserved} by proving that flow~\eqref{eq:HCF} preserves many other natural (semi)positivity notions in complex geometry, and satisfies generalized maximum principle for tensors.
	
	The results of~\cite{us-16} and~\cite{us-17-2} suggest that geometric features of the HCF are closely related to the properties of the underlying torsion-twisted connection. Now, we establish a further link between the HCF and $\n^T$.
	
	\begin{theorem}\label{thm:nt_invariance}
		Assume that $g(t)$, $t\in [0,\tau)$ solves the HCF on a Hermitian manifold $(M, g_0)$. Let $\mc E\to M$ be a holomorphic tensor bundle associated with $T^{1,0}M$, e.g., $T^{1,0}M$, $\End(T^{1,0}M)$, $\Lambda^{k,0}(M)$. Denote by $\n_{\mc E}(t)$ the connection induced on $\mc E$ by the torsion-twisted connection $\n^T(t)$ of $(M,g(t))$.
		
		If a holomorphic subbundle $\mc V\subset \mc E$ is invariant under $\n_{\mc E}(\tau')$ for some $\tau'\in[0,\tau)$, then it is invariant under $\n_{\mc E}(t)$ for any $t\in[0,\tau)$.
	\end{theorem}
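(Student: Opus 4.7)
The plan is to translate the $\n^T(t)$-invariance of $\mc V$ into the vanishing of a tensor $II(t)$, derive its evolution equation under the HCF, and conclude by uniqueness for linear parabolic systems.

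First, observe that on any holomorphic tensor bundle $\mc E$ associated to $T^{1,0}M$, the $(0,1)$-part of $\n^T(t)$ coincides with the metric-independent $\bar\partial$-operator and automatically preserves the holomorphic subbundle $\mc V$. Hence $\mc V$ is $\n^T(t)$-invariant if and only if the second fundamental form
\[
II(t)(\xi, s) := \pi_{\mc E/\mc V}\bigl(\n^T(t)_\xi\, s\bigr), \qquad s\in\Gamma(\mc V),\ \xi\in T^{1,0}M,
\]
vanishes. This expression is $C^\infty(M)$-linear in both slots, so $II(t)\in\Gamma\bigl(\Lambda^{1,0}(M)\otimes\Hom(\mc V,\mc E/\mc V)\bigr)$ is a genuine tensor field.

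Second, I would compute the evolution of $II(t)$. From $\pd_t g_{i\bar j}=-(\cric+\mc Q)_{i\bar j}$ one derives $\pd_t \Gamma^k_{ij}=\n_i K^k_j$ with $K^k_j=-g^{k\bar l}(\cric+\mc Q)_{j\bar l}$, and after swapping the lower indices, $\pd_t(\Gamma^T)^k_{ij}=\n_j K^k_i$. Consequently $\pd_t II(\xi,s)=\pi_{\mc E/\mc V}\bigl((\n_s K)(\xi)\bigr)$. The key structural input is that, by the Ambrose--Singer Theorem~\ref{thm:ambrose-singer} applied to $\n^T$ together with the expressions for $\curvature^T$ in Proposition~\ref{prop:torsion_twisted_curvature}, whenever $\mc V$ is $\n^T$-invariant the Chern curvature satisfies constraints such as $\curvature(\zeta,\bar\eta)(T^{1,0}M)\subset\mc V$ for $\zeta\in\mc V$, and analogous invariance statements hold for the torsion and for their $\n^T$-covariant derivatives. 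Through repeated use of the Chern--Bianchi identities recalled in Section~\ref{sec:notations}, this should allow rewriting the $\n K$-term as a linear second-order differential expression in $II$ itself, yielding a parabolic evolution equation of the form
\[
\pd_t II = \Delta_{\n^T}\,II + (\text{lower-order terms linear in }II).
\]

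Once the evolution has this form, forward and backward uniqueness for linear parabolic equations on compact manifolds (forward by Duhamel/Gronwall; backward by log-convexity of $t\mapsto\|II(t)\|^2_{g(t)}$ in the style of Lions--Malgrange) applied to the datum $II(\tau')=0$ forces $II(t)\equiv 0$ throughout $[0,\tau)$, completing the proof. The main obstacle is the middle step: organizing $\pd_t II$ into clean linear parabolic form. This requires careful packaging of the $\n K$-term via the Chern--Bianchi identities and the consequences of Ambrose--Singer for $\n^T$, and the cancellations that make it possible appear to rely decisively on the precise choice of HCF --- in particular on the $\mc Q$-term --- without which one would expect inhomogeneous source terms that would obstruct the backward uniqueness step.
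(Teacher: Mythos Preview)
Your overall strategy matches the paper's: encode invariance as vanishing of the second fundamental form $\pmb\beta$ (your $II$), derive a linear parabolic evolution for it, and conclude by forward/backward uniqueness. The gap is in the middle step, and your proposed justification there is circular.

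You write that ``whenever $\mc V$ is $\n^T$-invariant'' the Chern curvature satisfies certain constraints, and that these, via Bianchi identities, let you rewrite $\pd_t II$ as a linear expression in $II$. But the constraints you invoke (e.g.\ $\curvature(\zeta,\bar\eta)(T^{1,0}M)\subset\mc V$ for $\zeta\in\mc V$) are \emph{consequences} of $II=0$; they hold only at $t=\tau'$, not for general $t$. To run a uniqueness argument you need the evolution equation to be linear in $II$ \emph{identically in $t$}, not just at the single time where $II$ is already known to vanish. Ambrose--Singer is the wrong tool here: it extracts information from an invariance hypothesis, which is precisely what you do not yet have for $t\neq\tau'$.

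The paper avoids this circularity by never invoking invariance in the derivation. Its key lemma computes $\pd_t(\n^T)$ and shows it can be expressed entirely through the torsion-twisted \emph{curvature} $\Omega^T$ and a first $\widetilde{\n}$-derivative thereof; this is where the specific HCF (including the $\mc Q$-term) enters via Bianchi identities, and it holds for all $t$. One then uses the \emph{universal} block decomposition of a connection and its curvature with respect to a $C^\infty$-splitting $\mc E\simeq\mc V\oplus\mc Q$: the off-diagonal $(\mc V\to\mc Q)$ block of $R_{\mc E}$ is $\n_{\Hom(\mc V,\mc Q)}\pmb\beta$, tautologically linear in $\pmb\beta$. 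Reading off the $\Hom(\mc V,\mc Q)$-component of the lemma therefore yields $\pd_t\pmb\beta=\bar\partial^*\bar\partial\pmb\beta+A*\pmb\beta+B*\n\pmb\beta$ identically, with no invariance assumption needed. The forward/backward uniqueness step then applies cleanly.
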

	\begin{proof}
		To a short exact sequence of vector bundles
		\begin{equation}\label{eq:short_exact_sequence}
		0\to \mc V\xrightarrow{\iota}\mc E\xrightarrow{p}\mc Q\to 0,
		\end{equation}
		and any connection $D$ on $\mc E$, one can associate a \emph{second fundamental form}
		\[
		\pmb{\beta}\in \Lambda^1(M)\otimes\Hom(\mc V,\mc Q),\quad \pmb{\beta}_\bullet(v)=p(D_\bullet (\iota v))
		\]
		The second fundamental form $\pmb{\beta}$ vanishes if and only if $\mc V$ is invariant under $D$.
		
		For convenience, we denote the second fundamental form for $\mc V\subset \mc E$ with respect to the $t$-dependent connection $D=\n_{\mc E}(t)$ by $\pmb{\beta}=\pmb{\beta}(t)$. We know that $\pmb{\beta}(\tau')=0$, and we want to prove that $\pmb\beta(t)=0$ all $t\in[0,\tau)$. The idea is to prove that $\pmb{\beta}(t)$ satisfies a strongly parabolic equation, and to check that $\pmb{\beta}(t)\equiv 0$ is a solution. Then, due to a general uniqueness result for backward/forward solutions of parabolic PDEs, we conclude that $\pmb{\beta}(t)$ vanishes for all $t$.
		\begin{lemma}\label{lm:torsion_twisted_connection_evolution}
			Connection $\n^T=\n^T(t)$ satisfies
			\begin{equation}
			\begin{split}
			\frac{d}{dt}(\n^T)_{ij}^{\ \ k}&=
				-g^{m\bar n}(\widetilde{\n}_m\Omega^T)_{i\bar n j}^{\ \ \ k}
				-
				\frac{1}{2}g^{m\bar n}g^{p\bar s i}T_{\bar n\bar s}(\Omega^T)_{mpj}^{\ \ \ k}
			\\
			\frac{d}{dt}(\n^T)_{\bar ij}^{\ \ k}&=0,
			\end{split}
			\end{equation}
			where $\widetilde{\n}:=\n\otimes 1+1\otimes\n^T$ is a connection on $\Lambda^2(M)\otimes \End(T^{1,0}M)$.
		\end{lemma}
		\begin{proof}
			First, we note that $(\n^T)^{0,1}=\bar{\pd}$, hence the (0,1)-type part of $\n^T$ is independent of $g$.
			
			By a direct computation (see~\cite[\S 10]{st-ti-11}) we have
			\[
			\frac{d}{dt}\n_{ij}^{\ \ k}=g^{k\bar n}\n_i h_{j\bar n}
			\]
			\[
			\frac{d}{dt}T_{ij}^k=g^{k\bar n}(\n_i h_{j\bar n}-\n_j h_{i\bar k}),
			\]
			where $h_{i\bar j}=\frac{d}{dt}g_{i\bar j}$. Therefore,
			\[
			\frac{d}{dt}(\n^T)_{ij}^{\ \ k}=\frac{d}{dt}(\n-T)_{ij}^{\ \ k}=g^{k\bar n}\n_j h_{i\bar n}
			\]
			Now, recall that
			\[
			h_{i\bar j}=-(g^{m\bar n}\Omega_{m\bar n i\bar j}+\frac{1}{2}g^{m\bar n}g^{p\bar s}T_{mp\bar j}T_{\bar n\bar s i}),
			\]
			and we compute
			\begin{equation}
			\begin{split}
			\frac{d}{dt}(\n^T)_{ij}^{\ \ k}&=
			-g^{m\bar n}\n_j\Omega_{m\bar n i}^{\ \ \ k}
				-\frac{1}{2}g^{m\bar n}g^{p\bar s}\n_j(T_{mp}^kT_{\bar n\bar s i})\\&=
			-g^{m\bar n}\n_m\Omega_{j\bar n i}^{\ \ \ k}-g^{m\bar n}T_{mj}^l\Omega_{l\bar n i}^{\ \ \ k}
				-\frac{1}{2}g^{m\bar n}g^{p\bar s}(
					\n_j T_{mp}^k T_{\bar n\bar s i}+
					T_{mp}^k \n_j T_{\bar n\bar s i})\\&=
			-g^{m\bar n}\n_m\Omega_{j\bar n i}^{\ \ \ k}-
				g^{m\bar n}T_{mj}^l\Omega_{l\bar n i}^{\ \ \ k}+
				g^{m\bar n}T_{mp}^k\Omega_{j\bar n i}^{\ \ \ p}-
				\frac{1}{2}g^{m\bar n}g^{p\bar s}T_{\bar n\bar s i}\n_jT_{mp}^k\\&=
			-g^{m\bar n}(
				\n_m\Omega_{j\bar n i}^{\ \ \ k}+
				T_{mj}^l\Omega_{l\bar n i}^{\ \ \ k}-
				T_{mp}^k\Omega_{j\bar n i}^{\ \ \ p}
			)
			-\frac{1}{2}g^{m\bar n}g^{p\bar s}T_{\bar n\bar s i}\n_jT_{mp}^k\\&=
			-g^{m\bar n}(\widetilde{\n}_m\Omega^T)_{i\bar n j}^{\ \ \ k}
			-\frac{1}{2}g^{m\bar n}g^{p\bar s}T_{\bar n\bar s i}(\Omega^T)_{mpj}^{\ \ \ k},
			\end{split}
			\end{equation}
			where throughout the computation we use the Bianchi identities and Proposition~\ref{prop:torsion_twisted_curvature} expressing $\Omega^T$ via $\Omega$ and $\n T$.
			This proves the lemma.
		\end{proof}
		Let 
		\[
		R_{\mc E}\in \Lambda^2(M)\otimes \End(\mc E)
		\]
		be the curvature of $\n_{\mc E}$. Lemma~\ref{lm:torsion_twisted_connection_evolution} directly implies an analogous formula for the evolution of $\n_{\mc E}(t)$. 
		\begin{equation}\label{eq:nT_evolution}
		\begin{split}
		\frac{d}{dt}(\n_{\mc E})_i&=
		-g^{m\bar n}(\widetilde{\n}_m R_{\mc E})_{i\bar n}
		-\frac{1}{2}g^{m\bar n}g^{p\bar s}T_{\bar n\bar s i}(R_{\mc E})_{mp}\\
		&=
		g^{m\bar n}(\widetilde{\n}_m R_{\mc E})_{\bar n i}
		-\frac{1}{2}g^{m\bar n}g^{p\bar s}T_{\bar n\bar s i}(R_{\mc E})_{mp},
		\end{split}
		\end{equation}
		where $\widetilde{\n}$ acts as $\n\otimes 1+1\otimes\n_{\mc E}$ on $\Lambda^2(M)\otimes \End(\mc E)$.
		
		To write down the evolution equation for $\pmb\beta$, let us fix a $C^\infty$-splitting of short exact sequence~\eqref{eq:short_exact_sequence}:
		\[
		\begin{tikzcd}
			0\arrow{r} & \mc V\arrow{r}{\iota} & \mc{E}\arrow{r}{p}\arrow[bend left=33,dashrightarrow]{l}{\iota^*} & \mc Q\arrow{r}\arrow[bend left=33,dashrightarrow]{l}{p^*} & 0,
		\end{tikzcd}
		\]
		and consider an isomorphism
		\begin{equation}\label{eq:isomorphism_split}
		\iota\oplus p^*\colon \mc V\oplus\mc Q \xrightarrow{\simeq} \mc E
		\end{equation}
		With this identification, connection $\n_{\mc E}$ corresponds to:
		\begin{equation}\label{eq:isomorphism_connection}
		(\iota^*\oplus p)\circ\n_{\mc E}\circ(\iota\oplus p^*)=
		\left(
		\begin{matrix}
		\n_{\mc V} & \pmb\gamma \\
		\pmb\beta & \n_{\mc Q}
		\end{matrix}
		\right),
		\end{equation}
		where
		\begin{equation}
		\begin{split}
		\n_{\mc V}&=\iota^*\circ\n_{\mc E}\circ\iota\\
		\n_{\mc Q}&=p\circ\n_{\mc E}\circ p^*
		\end{split}
		\end{equation}
		are the connections induced by $\n_{\mc E}$ on $\mc V$ and $\mc Q$ respectively via isomorphism~\eqref{eq:isomorphism_split},
		and
		\begin{equation}
		\begin{split}
		\pmb\beta&=p\circ\n_{\mc E}\circ \iota,
			\quad\pmb\beta\in \Lambda^1(M)\otimes\Hom(\mc V,\mc Q)\\
		\pmb\gamma&=\iota^*\circ\n_{\mc E}\circ p^*,
			\quad\pmb\gamma\in \Lambda^1(M)\otimes\Hom(\mc Q,\mc V)
		\end{split}
		\end{equation}
		are the second fundamental form and its conjugate. Since $\n_{\mc E}$ is a holomorphic connection, and $\mc V\subset\mc E$ is a holomorphic subbundle, we have $\pmb\beta\in\Lambda^{1,0}(M)\otimes\Hom(\mc V,\mc Q)$.
		
		Identity~\eqref{eq:isomorphism_connection} implies the corresponding decomposition for $R_{\mc E}=(\n_{\mc E})^2$ under isomorphism~\eqref{eq:isomorphism_split}:
		\begin{equation}\label{eq:isomorphism_curvature}
		(\iota^*\oplus p)\circ R_{\mc E}\circ(\iota\oplus p^*)=
		\left(
		\begin{matrix}
		R_{\mc V}+\pmb\gamma\wedge\pmb\beta & \n_{\Hom(\mc Q,\mc V)}\pmb\gamma\\
		\n_{\Hom(\mc V,\mc Q)}\pmb\beta & R_{\mc Q}+\pmb\beta\wedge\pmb\gamma
		\end{matrix}
		\right),
		\end{equation}
		with $R_{\mc V}$ and $R_{\mc Q}$~--- the curvatures of $(\mc V,\n_{\mc V})$ and $(\mc Q,\n_{\mc Q})$ respectively, and 
		\begin{equation}
		\begin{split}
		\n_{\Hom(\mc V,\mc Q)}\colon \Lambda^{1}(M)\otimes\Hom(\mc V,\mc Q)\to \Lambda^{2}(M)\otimes\Hom(\mc V,\mc Q)\\
		\n_{\Hom(\mc Q,\mc V)}\colon \Lambda^{1}(M)\otimes\Hom(\mc Q,\mc V)\to \Lambda^{2}(M)\otimes\Hom(\mc Q,\mc V)
		\end{split}
		\end{equation}
		are natural extensions of the corresponding connections. Note that, since connection $\n_{\Hom(\mc V,\mc Q)}$ is holomorphic, and $\pmb\beta\in\Lambda^{1,0}(M)\otimes \Hom(\mc V,\mc Q)$, we have $(\n_{\Hom(\mc V,\mc Q)})^{0,1}\pmb\beta=\bar{\pd}\pmb\beta$
		
		Evolution equation~\eqref{eq:nT_evolution} implies that
		\[
		\frac{d}{dt}\pmb \beta_i=
		p\circ
		\Bigl(g^{m\bar n}(\widetilde{\n}_m R_{\mc E})_{\bar n i}
		-\frac{1}{2}g^{m\bar n}g^{p\bar s}T_{\bar n\bar s i}(R_{\mc E})_{mp}\Bigr)
		\circ\iota.
		\]
		Now we apply equations~\eqref{eq:isomorphism_connection} and~\eqref{eq:isomorphism_curvature}, to identify the piece of $g^{m\bar n}(\widetilde{\n}_m R_{\mc E})_{\bar n i}
		-\frac{1}{2}g^{m\bar n}g^{p\bar s}T_{\bar n\bar s i}(R_{\mc E})_{mp}\in\Lambda^{1,0}(M)\otimes\End(\mc E)$ which corresponds to $\Lambda^{1,0}(M)\otimes\Hom(\mc V,\mc Q)$. It is straightforward to check that
		\begin{equation}\label{eq:beta_evolution}
		\begin{split}
		\frac{d}{dt}\pmb \beta_i=({\bar\pd}^* \bar{\pd}\pmb\beta)_i+(A*\pmb\beta)_i+(B*\n_{\Hom(\mc V,\mc Q)}\pmb\beta)_i
		\end{split}
		\end{equation}
		for some contractions with time-dependent tensors $A,B$.
		
		Equation~\eqref{eq:beta_evolution} is a strictly parabolic PDE for $\pmb\beta\in \Lambda^{1,0}(M)\otimes\Hom(\mc V,\mc Q)$ with time-dependent coefficients. This equation has a unique forward solution $\pmb\beta(t)=0$ on $[\tau',\tau)$ starting with $\pmb\beta(\tau')=0$ by the standard theory of parabolic PDEs. It also has a unique backward solution on $[0,\tau']$, by a general result~\cite[Theorem 3]{ko-16}. Therefore $\pmb\beta(t)=0$ and $\mc V\subset \mc E$ is invariant under $\n_{\mc E}(t)$ for all $t\in[0,\tau)$.
	\end{proof}
	
	\begin{theorem}\label{thm:nt_invariance_2}
		With the notations of Theorem~\ref{thm:nt_invariance}, if a holomorphic subbundle $\mc V\subset \mc E$ is trivialized by $\n_{\mc E}(\tau')$-parallel sections for some $\tau'\in[0,\tau)$, then the same is true for $\n_{\mc E}(t)$ for any $t\in[0,\tau)$.
	\end{theorem}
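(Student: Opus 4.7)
The plan is to reduce Theorem~\ref{thm:nt_invariance_2} to a statement about individual parallel sections and follow closely the strategy of the proof of Theorem~\ref{thm:nt_invariance}. Specifically, I would show that any fixed $\n_{\mc E}(\tau')$-parallel section $\sigma$ of $\mc V$ remains $\n_{\mc E}(t)$-parallel for every $t\in[0,\tau)$. Applying this to a $\n_{\mc E}(\tau')$-parallel basis $\sigma_1,\dots,\sigma_r$ of $\mc V$ then gives the result, since these sections are holomorphic and hence intrinsic to the fixed holomorphic bundle $\mc V$ (independent of $t$), so they remain pointwise linearly independent at every time once they are so at $t=\tau'$.

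Setting up the framework: by Theorem~\ref{thm:nt_invariance} the subbundle $\mc V$ is invariant under $\n_{\mc E}(t)$ for all $t\in[0,\tau)$, so for a $\n_{\mc E}(\tau')$-parallel $\sigma\in\Gamma(\mc V)$ the one-form $\eta(t):=\n_{\mc E}(t)\sigma$ takes values in $\Lambda^1(M)\otimes\mc V$. Because $(\n_{\mc E}(t))^{0,1}=\bar\pd$ is $t$-independent and $\sigma$ is holomorphic, $\eta(t)$ actually lies in $\Lambda^{1,0}(M)\otimes\mc V$ with $\eta(\tau')=0$. Thus $\eta(t)$ plays the role that $\pmb\beta(t)$ played in the proof of Theorem~\ref{thm:nt_invariance}, and I aim to show $\eta\equiv 0$ by the same PDE-theoretic argument.

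The key step is to derive a strictly parabolic evolution equation for $\eta(t)$ analogous to~\eqref{eq:beta_evolution}. Since $\sigma$ is $t$-independent, equation~\eqref{eq:nT_evolution} yields
\[
\frac{d}{dt}\eta_i = g^{m\bar n}(\widetilde\n_m R_{\mc E})_{\bar n i}\sigma - \tfrac{1}{2}g^{m\bar n}g^{p\bar s}T_{\bar n\bar s i}(R_{\mc E})_{mp}\sigma.
\]
The crucial substitution expresses $R_{\mc E}\sigma$ through $\eta$: since $\bar\pd\sigma=0$ and coordinate fields commute, $(R_{\mc E})_{i\bar j}\sigma=-\n_{\mc E,\bar j}\eta_i$ and $(R_{\mc E})_{ij}\sigma=\n_{\mc E,i}\eta_j-\n_{\mc E,j}\eta_i$. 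Plugging these identities into the display above and using the second Bianchi identity to trade $\widetilde\n_m R_{\mc E}$ for terms of the form $\bar\pd^*\bar\pd\eta$ plus lower-order contractions, I expect to arrive at the schematic form
\[
\frac{d}{dt}\eta_i=(\bar\pd^*\bar\pd\eta)_i+A*\eta+B*\n_{\Lambda^{1,0}\otimes\mc V}\eta
\]
for time-dependent tensors $A,B$ built from $g(t)$, the torsion, and the curvature. This bookkeeping, directly parallel to the derivation of~\eqref{eq:beta_evolution} with $\eta$ in place of $\pmb\beta$, is the main technical hurdle of the argument.

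Once the parabolic equation is in place, the conclusion is automatic: $\eta\equiv 0$ is an obvious solution matching $\eta(\tau')=0$, so standard forward uniqueness for strictly parabolic PDE on $[\tau',\tau)$ together with the backward uniqueness result~\cite[Theorem~3]{ko-16} on $[0,\tau']$ force $\eta(t)=0$ throughout $[0,\tau)$. Thus each $\sigma_a$ in a $\n_{\mc E}(\tau')$-parallel basis of $\mc V$ is $\n_{\mc E}(t)$-parallel for all $t$, and pointwise linear independence of $\{\sigma_a\}$ (being a property of the fixed sections) guarantees that $\mc V$ is trivialized by $\n_{\mc E}(t)$-parallel sections for every $t\in[0,\tau)$.
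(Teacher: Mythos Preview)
Your proposal is correct and follows essentially the same approach as the paper: invoke Theorem~\ref{thm:nt_invariance} to keep $\mc V$ invariant, then for a fixed holomorphic section $\sigma$ show that $\eta(t)=\n_{\mc E}(t)\sigma$ satisfies a linear strictly parabolic equation derived from~\eqref{eq:nT_evolution}, and conclude $\eta\equiv 0$ via forward and backward uniqueness. The paper's own proof is a terse two-sentence sketch of exactly this argument; your write-up simply fills in the intermediate identities $(R_{\mc E})_{i\bar j}\sigma=-\n_{\mc E,\bar j}\eta_i$ and $(R_{\mc E})_{ij}\sigma=\n_{\mc E,i}\eta_j-\n_{\mc E,j}\eta_i$ and makes explicit the linear-independence observation.
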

	\begin{proof}
		The proof is essentially the same as for Theorem~\ref{thm:nt_invariance_2}. By the above theorem, we already know that $\mc V$ is invariant under $\n_{\mc E}(t)$ for any $t\in[0,\tau)$. Now, for a section $s\in\Gamma(\mc V)$, we use equation~\eqref{eq:nT_evolution}, and directly verify that $\n_{\mc E}(t)s\in \Gamma(\mc V)$ satisfies a linear strictly parabolic PDE, so $\n_{\mc E}(t)s\equiv 0$ is the unique backward/forward solution starting at $0$.
	\end{proof}
	
	\begin{remark}
		Theorem~\ref{thm:nt_invariance_2} for $\mc V=K_M$ was first observed in~\cite{us-17-2}. There we proved that the minimum of the second scalar Chern curvature
		\[
		\widehat s:= \Omega_{n\bar s}^{\ \ \bar s n}
		\]
		is strictly increasing along the HCF $g(t), t\in[0,\tau)$, unless $\widehat{s}(t)\equiv 0$, and  the canonical bundle $K_{\til M}$ admits a $\n^T(t)$-parallel section for any $t\in[0,\tau)$.
	\end{remark}

	\section{Geometry of Semipositive Griffiths Curvature}\label{sec:main}
	
	In this section we apply the above observations on the geometry of the torsion-twisted connection, and prove our main structure result about Hermitian manifolds of semipositive Griffiths curvature. Before we turn to the proof, let us consider in detail the principle source of such manifolds.
	
	\begin{example}[Submersion metrics on complex homogeneous manifolds]\label{ex:submersion}
		Let $G$ be a connected complex Lie group, $H\subset G$ its complex subgroup, and $\mf h\subset \mf g$ the corresponding Lie algebras. Denote the connected component of the normalizer of $H$ in $G$ by $N^0_G(H)$.
		
		Consider a homogeneous manifold $M=G/H$. The short exact sequence of complex vector spaces
		\[
		0\to \mf h\to\mf g\to \mf g/\mf h\to 0 
		\]
		defines a short exact sequence of holomorphic vector bundles on $M$. Specifically, at $[\gamma H]\in M$ we have
		\[
		0\to \Ad_{\gamma}\mf h\xrightarrow{i} \mf g\xrightarrow{p} T^{1,0}_{[\gamma H]}M\to 0,
		\]
		where $\mf g\to M$ is a trivial bundle, and $\Ad(\mf h):=\{\Ad_{\gamma}\mf h\}_{[\gamma H]\in M}$ is its subbundle. 
		Any Hermitian metric $h\in\Sym^{1,1}(\mf g^*)$ on the Lie algebra of $G$ defines a \emph{submersion metric} on $M$ via $p\colon \mf g\to T^{1,0}M$, which we denote by $g=p_*h$.
		
		As we have computed in~\cite{us-th}, $(M,p_*h)$ has semipositive Griffiths curvature, and its associated Chern-Ricci form $\rho$ can be expressed as follows. Let $\xi\in T^{1,0}_{m}(G/H)$ and for simplicity assume $m=[eH]\in M$ is the coset the unit.
		Then
		\[
		\rho(\xi,\bar{\xi})=\big|\beta(\xi)\big|^2_{\Hom(\mf h,\mf g/\mf h)},
		\]
		where $\beta\in \Lambda^{1,0}(M,\Hom(\Ad(\mf h),T^{1,0}M))\simeq \mf g/\mf h\otimes \Hom(\mf h,\mf g/\mf h))$ is given by
		\[
		\beta_\xi(w)=p([v,w]),\quad \mbox{where }\xi=p(v)\in T_{[eH]}^{1,0}M,
		\]
		and the norm is taken with respect to the metric induced by $h$.
		
		Clearly, $\rho(\xi,\bar \xi)=0$ if and only if $[v,\mf h]\subset \mf h$, or, equivalently, $\exp(v)\in G$ normalizes $\mf h$. Therefore, the null spaces of $\rho$ are generated by the infinitesimal action of the normalizer of $H$ in $G$. It is easy to see that the action of $N^0_G(H)/H$ on $M=G/H$ is almost free, holomorphic, and $p_*h$-isometric.		
	\end{example}
	
	Example~\ref{ex:submersion} highlights what we can and what we cannot anticipate from a possible characterization of Hermitian manifolds of semipositive Griffiths curvature. There is no hope for the isometric splitting of such a manifold $(M,g)$, akin the one in K\"ahler setting, since the orbits of $N^0_G(H)/H$ in the example do not split off isometrically/holomorphically. Instead, we expect (at least locally) a holomorphic bundle/foliation-like structure with the fibers/leaves~--- orbits of a holomorphic isometric action of a Lie group. The next theorem formalizes this expected behavior.
	
	\begin{theorem}\label{thm:main}
		Let $(M,g_0)$ be a compact Hermitian manifold with semipositive Griffiths curvature. Then there exists a complex Lie group $G$ acting almost freely, holomorphically and $g_0$-isometrically on the universal cover $\widetilde{M}$ of $M$ with the following property:
		\smallskip
		
		\noindent
		For any $\epsilon>0$, $k\in \N$ there exists a $G$-invariant Hermitian metric $g$ on $M$ such that
		\begin{enumerate}
			\item[(i)] $||g_0-g||_{C^k,g_0}< \epsilon$;
			
			\item[(ii)] $(M, g)$ has semipositive Griffiths curvature;
			
			\item[(iii)] the null space distribution of the Chern-Ricci form $\rho=\rho(g)$ coincides with the 		
			distribution generated by the infinitesimal action of $G$:
			\[\mathrm{Null}(\rho_x)=T_x (G.x),\]
			where $G.x$ is the orbit of $x\in\widetilde{M}$.
		\end{enumerate}
	\end{theorem}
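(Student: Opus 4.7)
My plan is to run the HCF for a short time to regularize $g_0$, apply the strong maximum principle to extract a $\n^T$-parallel null distribution at positive time, and then propagate the resulting infinitesimal Lie group action back to the initial metric via the HCF-holonomy invariance of Section~\ref{sec:hcf}.

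Let $g(t)$, $t\in[0,\tau)$, denote the solution of the HCF~\eqref{eq:HCF} with $g(0)=g_0$. By Theorem~\ref{thm:hcf-griffiths-preserved}, every $g(t)$ has Griffiths-semipositive Chern curvature. Standard parabolic regularity gives $g(t)\to g_0$ in $C^\infty$ as $t\to 0^+$, so for given $\epsilon>0$ and $k\in\N$ I may pick $t_0\in(0,\tau)$ so small that $\|g_0-g(t_0)\|_{C^k,g_0}<\epsilon$ and set $g:=g(t_0)$. Properties~(i) and~(ii) are immediate from this choice and Theorem~\ref{thm:hcf-griffiths-preserved}.

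For~(iii), at time $t_0>0$ I invoke the strong maximum principle for the Chern curvature of the HCF (Theorem~5.2 of~\cite{us-16}) to deduce that the Griffiths-null distribution
\[
\mc N_x := \{\xi\in T_x^{1,0}M : \Omega(g)_x(\xi,\bar\xi,\cdot,\cdot)=0\}
\]
is a holomorphic subbundle of $T^{1,0}M$ which has become $\n^T(g)$-parallel. Under Griffiths semipositivity, $\rho(g)(\xi,\bar\xi)$ vanishes if and only if $\xi\in\mc N$, hence $\mc N=\mathrm{Null}(\rho(g))$. Lifting to the universal cover $\widetilde{M}$, simple connectedness combined with the maximum principle ensures $\widetilde{\mc N}$ is trivialized by global $\n^T(g)$-parallel sections; in the notation of Section~\ref{sec:torsion-twisted-connection}, $\widetilde{\mc N}=\mc F$. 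Theorem~\ref{thm:G_action} then supplies a complex Lie group $G$ acting almost freely, holomorphically and $g$-isometrically on $\widetilde{M}$ with orbits integrating $\mc F$, yielding~(iii) together with the $G$-invariance of $g$.

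To upgrade this to a $g_0$-isometric action of the \emph{same} $G$, I apply Theorem~\ref{thm:nt_invariance_2} to the holomorphic subbundle $\mc F\subset T^{1,0}\widetilde{M}$: since $\mc F$ is trivialized by $\n^T(g(t_0))$-parallel sections, it is trivialized by $\n^T(g(t))$-parallel sections for every $t\in[0,\tau)$. At $t=0$, Proposition~\ref{prop:killing} identifies those sections with holomorphic Killing vector fields for $g_0$, so the infinitesimal generators of $G$ are $g_0$-Killing, and $G$ acts $g_0$-isometrically on $\widetilde{M}$. The hard part will be the second step: extracting from the strong maximum principle not merely an $\n^T$-invariant null subbundle but one trivialized by genuinely $\n^T$-parallel sections on the universal cover---equivalently, flatness of the connection induced by $\n^T$ on $\mc N$. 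Once this is in hand, the rest of the argument is a bookkeeping combination of Theorems~\ref{thm:G_action}, \ref{thm:nt_invariance}, and~\ref{thm:nt_invariance_2}.
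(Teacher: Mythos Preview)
Your proposal is correct and follows essentially the same route as the paper: run the HCF, use Theorem~\ref{thm:hcf-griffiths-preserved} for~(ii), invoke the strong maximum principle at $t>0$ to identify $\mathrm{Null}(\rho)$ with the $\n^T$-fixed subbundle $\mc F$, apply Theorem~\ref{thm:G_action}, and propagate back to $t=0$ via Theorem~\ref{thm:nt_invariance_2}. The ``hard part'' you flag is resolved exactly as you suspect: the full output of the strong maximum principle gives, for every $\xi\otimes\bar\eta^{\#}$ in the zero set, both $\Omega(\xi,\cdot,\cdot,\bar\eta)=0$ and $g(\n_\xi T(\cdot,\cdot),\bar\eta)=0$, which by Proposition~\ref{prop:torsion_twisted_curvature} is precisely $\Omega^T(\cdot,\cdot)\xi=0$ for $\xi\in\mathrm{Null}(\rho_x)$; combined with $\n^T$-invariance of $\mathrm{Null}(\rho)$ and Ambrose--Singer this yields $\mathrm{Null}(\rho_x)=\{\xi:\mathrm{Hol}^0_x(\n^T)\xi=\xi\}=\mc F_x$ on $\til M$.
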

	\begin{proof}		
		While the metric $g_0$ satisfies a strong semipositivity notion, we do not have any a priori control over the null spaces of its Chern-Ricci form $\rho(g_0)$. In particular, these null spaces could potentially have different rank at different points, and even if the rank is constant, there is no reason for the corresponding distribution in $T^{1,0}M$ to be holomorphic/integrable. To overcome this issue we \emph{regularize} metric $g_0$ with the HCF. 		
		The idea of the proof is to solve the HCF on $(M,g_0)$, and show that the evolved metric $g(t)$ satisfies the statement of the theorem for sufficiently small time slice $t>0$.
		
		Let $g(t)$ be the solution to the HCF om $(M,g)$ for $t\in[0,\tau)$. Property (i) is clearly satisfied for small $t$, since the HCF is a strictly parabolic equation for $g(t)$ starting with $g(0)=g_0$. Property (ii) is satisfied by the main result of~\cite{us-16}, which states that the HCF preserves Griffiths semipositivity of the Chern curvature, see Theorem~\ref{thm:hcf-griffiths-preserved}.
		
		The most interesting and nontrivial is Property (iii), as it guarantees the existence of holomorphic Killing vector fields on $\til M$. To prove it, we consider metric $g:=g(t)$ for any $t>0$. Our plan is to study the geometry of the torsion-twisted connection $\n^T$ on the universal cover $(\til M,g)$. By the strong maximum principle for HCF~\cite[Theorem 5.2]{us-16} (see also~\cite[Theorem 5.1]{us-17-2})
		\begin{itemize}
			\item the set
			\[
			Z_x=\{\xi\otimes\bar{\eta}^\#\,|\,\xi,\eta\in T_x^{1,0}\til M,\ \ \Omega(\xi,\bar{\xi},\eta,\bar{\eta})=0\}\subset \End(T_x^{1,0}\til M),
			\]
			where
			\[
			\bar{\eta}^\#:=g(\cdot,\bar{\eta})\in\Lambda^{1,0}(\til M),
			\]
			is invariant under $\n^T$;
			\item for  any $\xi\otimes\bar{\eta}^\#\in Z_x$
			\[
			\Omega(\xi,\cdot,\bar{\cdot},\bar{\eta})=0,\quad  g(\n_\xi  T(\cdot,\cdot),\bar{\eta})=0.
			\]
		\end{itemize}
		
		In the view of Proposition~\ref{prop:torsion_twisted_curvature}, we can interpret the second statement as
		\[
			g(\Omega^T(\cdot,\cdot)\xi,\bar{\eta})=0
		\]
		Now, since $\Omega$ is Griffiths semipositive, the Chern-Ricci form
		\[
		\rho_x(\xi,\bar{\xi})=\sum_{i=1}^n \Omega(\xi,\bar{\xi},e_i,\bar{e_i}),\quad \mbox{where } \{e_1,\dots,e_n\} \mbox{ is a unitary basis of } T_x^{1,0}\til M,
		\]
		is also semipositive, and the set
		\[
		\mathrm{Null}({\rho}_x)=\{\xi\in T_x^{1,0}\ |\ \rho_x(\xi,\bar{\xi})=0 \}=\{\xi\in T_x^{1,0}\ |\ \Omega(\xi,\bar{\xi},\eta,\bar{\eta})=0 \mbox{ for any } \eta\in T_x^{1,0}\til M\}
		\]
		is invariant under $\n^T$-parallel transport. Moreover, for any $\xi\in \mathrm{Null}(\rho_x)$ we have
		\[
		\Omega^T(\cdot,\cdot)\xi=0\quad (\mbox{or equivalently }\Omega(\xi,\cdot,\cdot,\cdot)=0,\ \n_\xi T(\cdot,\cdot)=0),
		\]
		and, vice versa, any $\xi\in T_x^{1,0}\til M$ such that $\Omega^T(\cdot,\cdot)\xi=0$ belongs to $\mathrm{Null}(\rho_x)$. Therefore by Ambrose-Singer theorem
		\[
		\mathrm{Null}(\rho_x):=\{\xi\in T_x^{1,0}\til M\ |\ \mathrm{Hol}^0_x(\n^T)\xi=\xi\}.
		\]
		Since $\til M$ is simply connected, $\mathrm{Hol}^0_x(\n^T)=\mathrm{Hol}_x(\n^T)$. 
		
		Now, we are in a good shape to apply Theorem~\ref{thm:G_action} and conclude that there exists an almost free, holomorphic, $g$-isometric action of a complex Lie group $G$ on $\til M$, such that
		\[
		\mathrm{Null}(\rho_x)=T_x(G.x).
		\]
		
		It remains to check that the action of $G$ is also $g_0$-isometric. By Theorem~\ref{thm:nt_invariance_2} the distribution $\mc F:=\mathrm{Null}(\rho_x)$ has the same $\n^T(t)$-parallel trivialization for any metric $g(t)$, $t\in[0,\tau)$ along the HCF, including $g(0)=g_0$. Therefore group $G$ is independent of the choice $t\in[0,\tau)$, and the action of $G$ is $g(t)$-isometric for any $t\in[0,\tau)$, including $g_0$.
	\end{proof}
	
	\begin{remark}
		The foliation of $\til M$ by $G$-orbits admits a \emph{transverse-K\"ahler} form. Indeed the Chern-Ricci form $\rho$ of $(\til{M}, g)$ is closed and strictly positive on the complement of distribution $\mc F=\mathrm{Null}(\rho)$. Similar foliations and the corresponding transverse-K\"ahler structures have been proved to be very effective tools in non-K\"ahler geometry, see e.g.,~\cite{or-ve-11,pa-us-ve-16}.
	\end{remark}
	
	In general, group $G$ in~Theorem~\ref{thm:main} can be trivial. This is the case, e.g., for any rational homogeneous manifold equipped with a submersion metric, since in Example~\ref{ex:submersion} we would have $N_G(H)=H$. However, if this is the case, we deduce that the Chern-Ricci form $\rho$ of $(M,g)$ is strictly positive, in particular the anticanonical bundle $-K_M$ is ample, and $M$ is projective (even Fano).
	
	\begin{corollary}
		If a compact complex \emph{non-Fano} manifold $M$ admits a metric $g_0$ of semipositive Griffiths curvature, then there exists an almost free, holomorphic, and isometric action of a \emph{nontrivial} complex Lie group $G$  on the universal cover of $M$.
	\end{corollary}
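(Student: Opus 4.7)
The plan is to derive the corollary as a direct contrapositive consequence of Theorem~\ref{thm:main}. Suppose, for contradiction, that the only complex Lie group $G$ produced by Theorem~\ref{thm:main} is trivial. I will show that this forces $M$ to be Fano, against our hypothesis.

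First, apply Theorem~\ref{thm:main} to $(M,g_0)$ with any fixed $\epsilon > 0$ and $k \ge 2$, producing a nearby Hermitian metric $g$ of semipositive Griffiths curvature and a complex Lie group $G$ acting almost freely, holomorphically and $g_0$-isometrically on $\widetilde{M}$, with the property that $\mathrm{Null}(\rho(g)_x) = T_x(G.x)$ for every $x \in \widetilde{M}$. If $G$ is trivial, each orbit $G.x$ is a single point, so $T_x(G.x) = 0$, and therefore $\mathrm{Null}(\rho(g)_x) = 0$ at every point $x$. Combined with the semipositivity of $\rho(g)$ (which follows from the Griffiths semipositivity of $\Omega(g)$ via the computation recalled in the proof of Theorem~\ref{thm:main}), this means $\rho(g)$ is a strictly positive $(1,1)$-form on $\widetilde{M}$, hence descends to a strictly positive $(1,1)$-form on $M$.

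Next, recall from Definition~\ref{def:chern_ricci} that $\rho(g)$ is the curvature form of the Hermitian line bundle $(\Lambda^n T^{1,0}M, g)$, that is, minus the curvature of the canonical bundle $K_M$ with its induced Hermitian metric. Its cohomology class is $2\pi c_1(M)$. The strict positivity of $\rho(g)$ therefore exhibits $-K_M$ as a positive line bundle, so by the Kodaira embedding theorem $-K_M$ is ample, $M$ is projective, and $M$ is Fano by definition. This contradicts the assumption that $M$ is non-Fano, completing the proof.

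There is no substantial obstacle: the corollary is essentially an unpacking of condition (iii) of Theorem~\ref{thm:main}. The only nontrivial step conceptually is the appeal to Kodaira's theorem to convert the differential-geometric positivity of $\rho(g)$ into the algebro-geometric positivity of $-K_M$; everything else is immediate from the definitions and the theorem just proved.
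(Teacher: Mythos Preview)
Your proposal is correct and follows essentially the same route as the paper: the paragraph immediately preceding the corollary already sketches exactly this contrapositive argument, observing that if $G$ is trivial then property~(iii) of Theorem~\ref{thm:main} forces $\rho(g)$ to be strictly positive, whence $-K_M$ is ample and $M$ is Fano. Your write-up simply makes the Kodaira embedding step explicit, which the paper leaves implicit.
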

	\emergencystretch=1em
	\printbibliography
\end{document}